\documentclass[a4paper,10pt]{amsart}
\usepackage[utf8]{inputenc}
\usepackage{amsmath,amssymb,amsthm,amscd,amsfonts,mathrsfs,verbatim,hyperref,stmaryrd}
\usepackage{orcidlink}
\usepackage[all]{xy}

\newtheorem{theorem}{Theorem}[section]
\newtheorem{remark}[theorem]{Remark}

\newtheorem{proposition}[theorem]{Proposition}

\newtheorem{lemma}[theorem]{Lemma}

\def\N{\mathbb N}
\def\Z{\mathbb Z}

\def\C{\mathbb C}
\def\d{\operatorname{d}}
\newcommand{\gl}{{\mathfrak{gl}}}
\renewcommand{\b}{{\mathfrak{b}}}
\newcommand{\n}{{\mathfrak{n}}}

\newcommand{\e}{\mathbf e}
\newcommand{\f}{\mathbf f}
\renewcommand{\v}{\mathbf v}
\newcommand{\w}{\mathbf w}
\renewcommand{\O}{{\mathcal{O}}}

\title{Moduli of atoms of complex projective varieties}
\author{Maxim Kontsevich, Szil\'ard Szab\'o}
\address{Maxim Kontsevich: Institut des Hautes Etudes Scientifiques, Le Bois-Marie, 35 route de Chartres, 91893 Bures-sur-Yvette, France \newline  Szil\'ard Szab\'o: Institute of Mathematics,
Faculty of Science, E\"otv\"os Lor\'and University, P\'azm\'any P\'eter s\'et\'any 1/C, Budapest, Hungary, H-1117, \texttt{szilard.szabo@ttk.elte.hu}; HUN-REN Alfr\'ed R\'enyi Institute of Mathematics, Re\'altanoda utca 13-15., Budapest 1053, Hungary, \texttt{szabo.szilard@renyi.hu}}

\begin{document}

\begin{abstract}
The category of constituent pieces of Dubrovin connection of varieties is introduced and shown to be representable in affine space.
\end{abstract}

\maketitle

\section{Introduction}\label{sec:intro}

Let us fix an integer $r\geq 2$ and a constant $c_i\in\C^{\times} \setminus \{ 1 \}$ for each $1\leq i \leq r$.
Set $\vec{c} = (c_1, \ldots , c_r)$.  
We will denote by $B\subset \operatorname{GL} (r, \C )$ the standard Borel subgroup consisting of invertible upper triangular matrices, and by $U$ its unipotent radical. 
We denote the identity matrix of dimension $r$ by $\mbox{I}_r$. 
We let $H\subset \operatorname{GL}(r,\C )$ stand for the maximal torus of $B$, so $H\cong B/U \cong (\C^{\times})^r$. 
We let $\b\subset \gl_r (\C )$ stand for the Lie algebra of $B$ and $\n$ for its nilpotent radical, i.e. the subspace of strictly upper triangular matrices.
By the flag defined by a basis $(\f_1, \ldots , \f_r)$ of a vector space $W$ we mean 
\[
0 \subset \C \f_1 \subset  \C \f_1 \oplus  \C \f_2 \subset \cdots \subset \C \f_1 \oplus \cdots \oplus \C \f_r = W. 
\]

Fix some $\varepsilon>0$ and define 
\[
 \Delta = \{ z\in\C \colon \quad |z| < 1+\varepsilon \} .
\]
Let $\O$ stand for the sheaf of holomorphic functions on $\Delta$ and $\O^{\times}$ the subgroup of invertible elements\footnote{More generally, one can let $\varepsilon>0$ vary, and consider the injective limit of the rings of holomorphic functions on discs of radius $>1$.}.
Let $\Omega$ denote the sheaf of holomorphic $1$-forms on $\Delta$ (i.e., the free $\O$-module of rank $1$ generated by the symbol $\d\! z$) and $\Omega (*\{ 0 \}) = \Omega \otimes_{\O} \O (*\{ 0 \})$ stand for the sheaf of meromorphic $1$-forms having a pole of arbitrary order.
Our aim is to introduce and study a groupoid $NSQC_{\vec{c}}$ whose objects are triples $(E, \nabla,  (\v_1 (1), \ldots ,\v_r (1)))$, where 
\begin{enumerate}
 \item[(Ob1)] $E$ is a free $\O$-module of rank $r$,
 \item[(Ob2)] $\nabla\colon E\to E\otimes \Omega (*\{ 0 \})$ is a meromorphic connection, i.e. a $\C$-linear operator obeying Leibniz' rule~\eqref{eq:Leibniz},
 \item[(Ob3)] a basis $(\v_1 (1), \ldots ,\v_r (1))$ of $E_{|1}$,
\end{enumerate}
subject to the conditions 
\begin{enumerate}
 \item[(Cond1)] $\nabla$ has regular singularity at $0$ \label{item:regular}
 \item[(Cond2)] writing $K \d\! z$ for the connection matrix of $\nabla$ with respect to the standard basis of $E$, $K$ has a pole of order at most $2$ at $0$, 
 \item[(Cond3)] letting $T_{\nabla}$ denote the monodromy transformation of $\nabla$ along a loop based at $1\in\C$ and winding around $0$ once in positive direction, there exist $t_{ji} \in \C$ with 
 \[
  T_{\nabla} (\v_i(1)) =  c_i \v_i(1) + \sum_{j<i} t_{ji} \v_j(1)
 \]
 (i.e., the flag of $E_{|1}$ determined by $(\v_1 (1), \ldots ,\v_r (1))$ is preserved by $T_{\nabla}$, with generalized eigenvalue  associated to $\v_i (1)$ equal to $c_i$).
\end{enumerate}

We denote by $\operatorname{Aut}(E)$ the group 
\[
 g\colon  \Delta \to \operatorname{GL} (r, \C )
\]
for pointwise multiplication. 
The morphisms of $NSQC_{\vec{c}}$ are given by $g\in \operatorname{Aut}(E)$.
The gauge action on $(E, \nabla )$ is the standard one, spelled out in~\eqref{eq:gauge_action}. 
The action of $g$ on each element of the basis $\v_i (1)$ is by the standard representation of $g(1)$.
In particular, a stabilizer $h$ of any object must satisfy $h(1) = \mbox{I}_r$, and  $h$ must be constant too by $h \circ \nabla = \nabla \circ h$.
Said differently, $\operatorname{Aut}(E)$ acts freely on the objects of $NSQC_{\vec{c}}$.

We now state our main result (see Theorem~\ref{thm:main}). 
\begin{theorem}
 Assume that $c_i \neq c_j$ for all $1\leq j \neq i \leq r$. 
 The connected components of the coarse moduli space of the groupoid $NSQC_{\vec{c}}$ are isomorphic to $\n \oplus \n$ (and thus to an affine space of dimension $r(r-1)$ over $\C$). 
\end{theorem}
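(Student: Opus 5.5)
We use the Riemann–Hilbert correspondence on the punctured disc, combined with a normal form for regular singular connections with pole of order at most two. The aim is to show that isomorphism classes of objects are determined by a continuous (integer) datum and a pair of elements of $\n$.

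\textbf{Step 1 (reduction to a normal form).} Since $\Delta$ is a disc, $E$ is trivial, and $\operatorname{Aut}(E)$ acts by $K\mapsto gKg^{-1}+g'g^{-1}$. Regularity together with the local Levelt/Gantmacher theory shows the following. Any regular singular germ is holomorphically gauge equivalent on $\Delta$ to a connection of the form $\d - \big(\frac{\Lambda}{z} + \frac{N}{z}\big)\d\! z$ with $\Lambda$ diagonal integer (the Birkhoff–Grothendieck/Levelt splitting) and $N$ constant. Equivalently, it is meromorphically equivalent to $\d - A\,\d\! z/z$ with $\exp(2\pi i A)=T_\nabla$.

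Because the $c_i$ are distinct, $T_\nabla$ is diagonalizable. The flag condition (Cond3) then says that $(\v_i(1))$ is a basis which is upper triangular relative to an eigenbasis.

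\textbf{Step 2 (the two copies of $\n$).} The holomorphic structure $E$ on $\Delta$ corresponds to a lattice inside the meromorphic bundle. Fix logarithms $\alpha_i$ of $c_i$ with $0\le \operatorname{Re}\alpha_i<1$. The Deligne lattice is then spanned by the flat-shifted eigen-sections $z^{\alpha_i}e_i$. A general lattice with regular singularity and pole order $\le 2$ is a modification of this lattice, and (Cond2) bounds the modifications. The pole order $\le 2$ condition should force the lattice to be, after a flag-compatible choice, of the form $\bigoplus z^{-m_i}\cdot(\text{filtered modification})$. The relevant data are:
\begin{itemize}
\item discrete data (the integers $m_i$ and a permutation), which index the connected components;
\item a continuous datum given by a unipotent matrix in $U\cong \n$ describing the position of the lattice relative to the eigen-splitting.
\end{itemize}
The second copy of $\n$ comes from the basis $(\v_i(1))$. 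Modulo the diagonal scalings absorbed by the constant automorphisms commuting with $T_\nabla$, it is a flag-adapted basis relative to the eigenbasis, i.e.\ an element of $B/H\cong U\cong\n$. Concretely, we choose $g$ to normalize $\v_i(1)$ as far as possible. The remaining freedom is the stabilizer of the normal form, which is constant diagonal and hence kills the torus parts.

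\textbf{Step 3 (freeness and representability).} We show that the map from objects to (discrete data, $u_1\in U$, $u_2\in U$) is $\operatorname{Aut}(E)$-invariant and separates orbits. Since the action is free, as noted before the statement, each component of the coarse moduli space is then the affine variety $U\times U\cong\n\oplus\n$, via the exponential map.

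\textbf{Main obstacle.} The main obstacle is Step 2. We must show that (Cond2) exactly cuts down the possible lattices to a torsor under $U$ with no further parameters. This requires checking, via the residue and $z^{-2}$ coefficient of $K$, that the leading term is nilpotent compatible with the flag, and that higher modifications would create poles of order $>2$. For this we would first try an explicit computation in the eigenbasis, using $z^{\alpha_i-\alpha_j}$ exponents and distinctness of the $c_i$ to exclude resonant off-diagonal terms.
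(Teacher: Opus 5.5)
\textbf{Verdict: genuine gap.} Your proposal is an outline whose decisive step is missing, and one of its stated ingredients is false.

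\emph{Step~1 is false as stated.} You claim that every such germ is \emph{holomorphically} gauge equivalent on $\Delta$ to $\d-(\Lambda+N)z^{-1}\d\! z$. This cannot hold. By \eqref{eq:gauge_action}, any $g\in\operatorname{Aut}(E)$ only changes $K_{-2}$ to $g(0)^{-1}K_{-2}g(0)$ (compare item~\eqref{item:conjugate} of Proposition~\ref{prop:main}), so a nonzero $K_{-2}$ is never removed. In fact $K_{-2}$ carries one of the two copies of $\n$ (the $\alpha_{ji}$ of Proposition~\ref{prop:gauge}). Only the meromorphic statement is true.

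\emph{Step~2 asserts the theorem rather than proving it.} Since Step~1 gives nothing holomorphic, all information about $E$ must come from the lattice. You only say that (Cond2) ``should force'' the admissible lattices to form a $U$-torsor for each discrete datum, and you leave this as the ``main obstacle''. That assertion \emph{is} the theorem.

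\emph{Other points.} Your discrete data are also off: there is no permutation. With a flag-adapted decomposition, each lattice has a unique invariant $\lambda\in\Z^r$, matching the paper's $\Z^r$-torsor of logarithms $\mu_i$; adding a permutation would overcount. Step~3 also needs a bijection onto (discrete data)$\times\n\times\n$, including surjectivity; freeness of the action alone does not give this.

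\textbf{How to complete your route.} It can be completed, and it genuinely differs from the paper. The paper builds a Borel-adapted frame from the Deligne lattice (Propositions~\ref{lem:frame} and~\ref{prop:main}) and then reduces by $\operatorname{Aut}_0(E)$ using the cokernel of $L_\mu$.
\begin{enumerate}
\item Normalize the eigenvectors of $T_\nabla$ by $\f_i\in\v_i(1)+\C\v_1(1)+\dots+\C\v_{i-1}(1)$. Then no torus quotient is needed: objects correspond bijectively to pairs (matrix of $T_\nabla$ in the basis $(\v_i(1))$, lattice $E$). The matrix ranges over $\operatorname{diag}(\vec c)+\n$, which is the first copy of $\n$.
\item Let $\epsilon$ be the meromorphic frame with $\epsilon_i(1)=\f_i$ and $\nabla\epsilon=\epsilon\,\operatorname{diag}(a_1,\dots,a_r)\,\d\! z/z$. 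Here $a_j-a_i\notin\Z$ for $j\neq i$, because $c_i\neq c_j$.
\item By the Iwasawa decomposition over $\C\{z\}$, $E=\epsilon\,u\,z^{\lambda}\,\C\{z\}^r$ with a unique $\lambda\in\Z^r$ and a unique upper unipotent $u$ whose entries $u_{ji}$ contain only monomials of degree $<\lambda_j-\lambda_i$.
\item With $\theta=z\partial_z$, (Cond2) becomes, for $j<i$,
\[
(\theta+a_j-a_i)u_{ji}+\sum_{j<k<i}(u^{-1})_{jk}(\theta+a_k-a_i)u_{ki}\in z^{\lambda_j-\lambda_i-1}\C\{z\},
\]
where the sum involves only shorter superdiagonals.
\item The operator $\theta+a_j-a_i$ acts on $z^n$ by the nonzero scalar $n+a_j-a_i$. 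So induction on $i-j$ shows that every coefficient of $u_{ji}$ of degree $<\lambda_j-\lambda_i-1$ is forced, while the coefficient of $z^{\lambda_j-\lambda_i-1}$ is free. Conversely, every choice of these free coefficients gives an admissible lattice.
\end{enumerate}
This yields exactly $\C^{r(r-1)/2}$ for each $\lambda$, which is the second copy of $\n$.

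\textbf{Comparison.} Your coordinates would be (monodromy in the given frame, position of the lattice). The paper's are ($D_{-2}$, the residual coefficients $\beta_{ji}$) of an explicit polynomial normal form. Your route uses non-resonance in one transparent place and does not need $c_i\neq 1$. But the computation above is the actual content of the proof, and without it your proposal does not establish the statement.
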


This will follow from Proposition~\ref{prop:main}, where we reduce formal meromorphic connections (possibly with pole orders higher than $2$) to a normal form that is analogous to the Levelt normal form~\cite[Exercise~2.20]{Sabbah_Isomonodromic_deformations} of logarithmic connections. 

Let us now comment on the reason why the conditions on the objects of $NSQC_{\vec{c}}$ are quite natural. 
Many interesting examples of quantum connections arise in Mirror Symmetry as Gauss--Manin systems of Landau--Ginzburg models $f\colon X\to \mathbb{C}$~\cite[Section~VII.5]{Sabbah_Isomonodromic_deformations}. 
Since these examples are obtained by taking partial Fourier--Laplace transform of the de Rham complex of $X$ twisted by $f$, it follows from the stationary phase formula that they are isomorphic over $\C \llbracket z \rrbracket$ to direct sums of meromorphic connections of the form 
\begin{equation}\label{eq:decomposition}
 \bigoplus_{a\in\mathbb{C}} \left( \d + \frac{a}{z^2} \d\! z \right) \otimes \nabla_{\operatorname{reg}}^a, 
\end{equation}
where the summation is finite and $\nabla_{\operatorname{reg}}^a$ has regular singularity at $z=0$. 
The set of numbers $a$ appearing non-trivially in this decomposition is called the spectrum of the connection. 
In the case of Fourier--Laplace transform of the twisted de Rham complex of $X$, the spectrum agrees with the set of critical values of $f$ (up to a sign). 
In other words, in such examples base change to a root of $z$ in the Turrittin--Hukuhara--Levelt decomposition is not needed. 
This class of examples is called non-commutative Hodge structures of exponential type~\cite[Definition~2.12]{Katzarkov_Kontsevich_Pantev}. 
Let $K_j$ denote the degree $j$ Laurent coefficient of $K$ at $0$. 
The restriction of $K_{-2}$ to any generalized eigenspace is the sum of a simple (i.e. constant) endomorphism and a nilpotent one. 
Up to simple endomorphisms, the study can therefore be reduced to the case when $K_{-2}$ is equal to a nilpotent endomorphism, and the connection has regular singularity. 
This explains assumption~(Cond1). 
(We will see in Proposition~\ref{prop:main}~\eqref{item:nilpotent},~\eqref{item:conjugate} that~(Cond1) and~(Cond2) imply that $K_{-2}$ must be nilpotent; if, moreover, $K_{-2}\in\n$ is a regular nilpotent element, then this implies $K_{-1}\in\b$ (Proposition~\ref{prop:nilpotent})).
Finally, we add the flag to the data in order to rigidify the functor (remove possible automorphisms). 
Under the non-resonance condition $c_i \neq c_j$, up to applying an element of the standard Borel subgroup, it simply amounts to a choice of the eigenvalues of the monodromy.

Let us now describe the context of this study. 
Since the 1990's, the study of Frobenius manifolds~\cite{Dubrovin_2D} has played a central role in Mathematical Physics, with close ties to Singularity Theory, Enumerative Geometry, Isomonodromic Deformations and other mathematical areas. 
As a matter of fact, in this article we will adopt the setup of (formal) $F$-manifolds~\cite{Hertling_Manin}, a slightly weakened version of Frobenius manifolds for which a flat metric is not required to exist. 
An $F$-manifold is called semisimple when the spectrum of quantum multiplication by the Euler vector field $Eu*$ is a semisimple endomorphism. 
As it is well-known, this operation is the degree $-2$ coefficient of the connection form of the quantum connection with respect to a natural local chart and trivialization. 
A considerable amount of literature has addressed the semisimple case. 
However, due to a result of Bayer and Manin~\cite{Bayer_Manin} later  strengthened by  Hertling, Manin and Teleman~\cite{Hertling_Manin_Teleman}, the quantum cohomology of a smooth projective variety can only be semisimple if it has no odd cohomology and is of Hodge--Tate type. 
Said differently, the quantum cohomology of many interesting varieties (e.g. Fano $3$-folds) is not semisimple. 
It is therefore interesting to extend the theory beyond the semisimple case. 
The aim of this paper is to initiate the study of non-semisimple quantum connections by establishing a particularly simple normal form for them (Theorem~\ref{thm:main}), and defining their moduli space. 

More recent motivation is provided by the proof of Kontsevich' blow-up conjecture by H.~Iritani~\cite{Iritani} on the behavior of the spectrum of the Dubrovin (also known as quantum) connection with respect to blow-ups, and its application to birational geometry~\cite{Katzarkov_Kontsevich_Pantev_Yu}.
The idea of this application consists in introducing a notion of ``atom'', denoted $\operatorname{Atom}(X)$, associated to a smooth projective variety $X$. 
Roughly speaking, $\operatorname{Atom}(X)$ is the set of direct summands of the   Dubrovin connection of $X$ appearing in the decomposition~\eqref{eq:decomposition} counted with their multiplicity, up to the following equivalence relation: 
\begin{enumerate}
 \item $\operatorname{Atom}(X_1 \sqcup X_2) \sim \operatorname{Atom}(X_1) +  \operatorname{Atom}(X_2)$; 
 \item for any blow-up 
  \[
  \widetilde{X} \to X 
  \]
  with smooth center $Z$ of codimension $r \geq 2$, 
  \[
  \operatorname{Atom}(\widetilde{X}) \sim \tau^* \operatorname{Atom}(X) + \left( \sum_{i=1}^{r-1} \varsigma_j^* \operatorname{Atom}(Z) \right)
  \]
  for suitable formal changes of quantum variables $\tau, \varsigma_j$;
  \item (quantum Leray--Hirsch) \cite[Theorem~0.2]{LLW} for a vector bundle $V\rightarrow B$ over a projective base, a certain relation between the class of the fiberwise projectivization $\operatorname{Proj}_B(V)$ and the class of $B$. 
\end{enumerate}
The precise nature of this equivalence relation is yet to be explored.
Once this is done, the atom of $X$ (together with the point spectrum $\{ c \}$ of its Dubrovin connection) will provide a fine birational invariant of $X$. 
By virtue of the Weak Factorization Theorem~\cite[Theorem~0.3.1.]{AKMW}, we then get a birationally invariant map 
\begin{align*}
  \operatorname{Atom}\colon \operatorname{ProjVar}_{\mathbb{C}} & \to \mathbb{N}^{\oplus_{\vec{c}} NSQC_{\vec{c}}}/\sim \\
  X & \mapsto \operatorname{Atom}(X),
\end{align*}
where $\operatorname{ProjVar}_{\mathbb{C}}$ stands for the set of projective varieties over $\mathbb{C}$, $NSQC_{\vec{c}}$ for the coarse moduli space of the functor denoted the same way, and the right hand side stands for finite linear combinations of elements of $NSQC_{\vec{c}}$ for all $\vec{c}$. 
For the sake of completeness, let us mention that the blow-up conjecture states that the spectrum of the Dubrovin connection of  $\widetilde{X}$ for small quantum parameter $q$, is asymptotically equal to that of $X$, union the points $\zeta^j q^{-1} \quad (0\leq j \leq r-2)$, 
where $\zeta$ is a primitive $(r-1)$'th root of unity. 
For more details about the case $\operatorname{dim}_{\mathbb{C}}(X) = 2 = r$, see also~\cite{Gyenge_Szabo}. 

\subsection*{Acknowledgments}
The authors are grateful to the referee for a thorough reading and useful suggestions. 
The second author would like to thank Claude Sabbah for  inspiring discussions and explanations, and the hospitality of the Institut des Hautes Etudes Scientifiques (Bures-sur-Yvette). 
The second author received funding from the grants NKFIH KKP 144148 and NKFIH K  	146401. 

\section{Meromorphic connections, lattices, regular singularity}

Here we collect some well-known material, see for instance~\cite{Sabbah_Isomonodromic_deformations}. 

Fix an integer $r\geq 2$. 
We denote by $V$ the free $\C \llbracket z \rrbracket$-module $\C \llbracket z \rrbracket^{\oplus r}$ of rank $r$, with its standard basis $\e_1, \ldots ,\e_r$. 
Consider the $\C$-linear operator defined by 
\begin{align*}
 \d\colon \C \llbracket z \rrbracket & \to \Omega \\
 z^n & \mapsto n z^{n-1} \d\! z. 
\end{align*}
A formal connection in $V$ is a $\C$-linear map: 
\[
 \nabla\colon V\to V\otimes_{\C \llbracket z \rrbracket} \Omega
\]
satisfying Leibniz' rule 
\begin{equation}\label{eq:Leibniz}
     \nabla (f \otimes \sigma ) = (\d\! f) \otimes \sigma + f \otimes \nabla ( \sigma ) 
\end{equation}
for any $f\in \C \llbracket z \rrbracket, \sigma\in \C \llbracket z \rrbracket^{\oplus r}$. 
Consider now the field of fractions 
\[
 \C (\!(z)\!) = \C \llbracket z \rrbracket [z^{-1}] 
\]
of $\C \llbracket z \rrbracket$. 
We define the $z$-adic valuation by 
\begin{align*}
 \operatorname{val}_z \colon \C (\!(z)\!) & \to \Z \\
 f = \sum_{n=-N}^{\infty} f_n z^n & \mapsto -N 
\end{align*}
whenever $f_{-N}\neq 0$. 
A formal meromorphic connection in $V$ is a $\C$-linear map: 
\[
 \nabla\colon V\otimes_{\C \llbracket z \rrbracket}\C (\!(z)\!)\to V\otimes_{\C \llbracket z \rrbracket} \C (\!(z)\!) \otimes_{\C \llbracket z \rrbracket} \Omega 
\]
satisfying Leibniz' rule as above, this time for any $f\in \C \llbracket z \rrbracket [z^{-1}]$.  

Let $\nabla$ be a formal meromorphic connection in $V$. 
With respect to $\e_1, \ldots \e_r$, $\nabla$ then admits the connection matrix form 
\begin{equation}\label{eq:connection_series}
    \nabla = \d + K \d\! z  = \d + \sum_{n=-N}^{\infty} K_n z^n \d\! z   
\end{equation}
for some integer $N>0$ and $K\in  \C (\!(z)\!)\otimes_{\C} \gl_r (\C )$, respectively $K_n \in \gl_r (\C )$. 
Namely, for any $f_1, \ldots , f_r \in \C (\!(z)\!)$ we have 
\begin{equation}\label{eq:connection_matrix}
 \nabla (f_1 \e_1 + \cdots + f_r \e_r ) = (\e_1, \ldots ,\e_r) ( \d + K\d\! z ) \begin{pmatrix}
    f_1 \\ \vdots \\ f_r
    \end{pmatrix},
\end{equation}
$\d$ acting on column vectors entrywise. 
We define the $z$-adic valuation of an element $K\in \C (\!(z)\!)\otimes_{\C} \gl_r (\C )$ as the minimum of the valuations of its entries, i.e. the smallest value of $-N$ such that $K_{-N}\neq 0$ in the above expansion. 
(The case $N=2$ is particularly relevant for applications to the functors $NSQC$, but the results of Section~\ref{sec:normal} hold for general $N>1$.) 
A lattice in $V\otimes \C (\!(z)\!)$ is a $\C \llbracket z \rrbracket$-submodule $V'$ that generates $V\otimes \C (\!(z)\!)$ as a $\C (\!(z)\!)$-vector space. 
The connection matrix of $\nabla$ with respect to a $\C \llbracket z \rrbracket$-basis of $V'$ is defined analogously to~\eqref{eq:connection_matrix}. 
A cyclic vector of $(V,\nabla )$ is $\e\in V\otimes \C (\!(z)\!)$ such that $\e, \nabla \e, \ldots , \nabla^{r-1} \e $ span a lattice in $V\otimes \C (\!(z)\!)$. 
We say that $\nabla$ admits a regular singularity at $\operatorname{Specf} \C \llbracket z \rrbracket/(z)$ if $V\otimes \C (\!(z)\!)$ admits a lattice $V'$ such that the connection matrix of $\nabla$ has only first-order poles on $V'$. 

The above notions can be defined for $\C \llbracket z \rrbracket$ replaced by $\O$, leading to the notion of holomorphic/meromorphic connections. 
Clearly, tensor product by $\C \llbracket z \rrbracket$ over $\O$ gives rise to a functor from the category of holomorphic/meromorphic connections to the category of formal holomorphic/meromorphic connections. 
In this context, \cite[Th\'eor\`eme~(5.3).i)]{Malgrange} states that the restriction of this functor to the full subcategory of objects with regular singularity is an equivalence. 

The space of formal meromorphic connections in $V$ is an affine space modelled on $\C (\!(z)\!) \otimes_{\C} \gl_r (\C )$, and the group $\operatorname{GL}(r, \C (\!(z)\!))$ acts on it: for any $g\in \operatorname{GL}(r, \C (\!(z)\!))$, 
\begin{equation}\label{eq:gauge_action}
    (\d\! + K \d\! z) \cdot g = \d\! + g^{-1} K g \d\! z + g^{-1}\d\! g. 
\end{equation}

\begin{proposition}\label{prop:nilpotent}
 Let $\nabla$ be a meromorphic connection on $V$ with regular singularity at $0$.
 Let $K$ denote the connection matrix of $\nabla$ with respect to the trivialization  $\e_1, \ldots ,\e_r$.
 Assume that 
 \begin{enumerate}
  \item $K$ has a pole of order $2$, 
  \item $K_{-2}\in\n$, 
  \item $K_{-2}$ is a regular endomorphism (i.e., its centralizer in $\operatorname{GL}(r, \mathbb{C})$ is of the lowest possible dimension $r$). 
 \end{enumerate}
 Then we must have $K_{-1}\in\b$. 
\end{proposition}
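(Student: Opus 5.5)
The plan is to turn regularity into an obstruction that can be computed explicitly from the two leading coefficients. Throughout, write $A(z) = zK = z^{-1}A_0 + A_1 + zA_2 + \cdots$ with $A_0 = K_{-2}$ and $A_1 = K_{-1}$; this is the matrix of $z\nabla_{\partial_z}$.

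\emph{Normalization.} A regular element of $\n$ has all superdiagonal entries nonzero. It is therefore conjugate under a constant element of $B$ (diagonal scaling followed by a unipotent correction) to the standard nilpotent Jordan block $J=\sum_{i<r} E_{i,i+1}$. A constant gauge transformation by $b\in B$ replaces each $K_n$ by $b^{-1}K_nb$, preserves $\b$, and preserves regular singularity. So we may assume $K_{-2}=J$.

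\emph{First step: the corner entry, via Moser's criterion.} Since $\nabla$ is regular, the Moser invariant of $A$ is not minimal at $m(A)=1+\operatorname{rank}(A_0)/r = 1+(r-1)/r$. Moser's theorem then forces the polynomial
\[
\theta(\lambda) = \bigl[z^{\operatorname{rank} A_0}\det(\lambda + A(z))\bigr]_{z=0}
\]
to vanish identically. Expanding $\det\bigl(J + z(A_1+\lambda)\bigr)$ to first order in $z$ gives $\operatorname{tr}\bigl(\operatorname{adj}(J)(A_1+\lambda)\bigr)$. Since $\ker J = \C\e_1$ and the left kernel is spanned by $\e_r^*$, we have $\operatorname{adj}(J)=\pm E_{1r}$. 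Hence $\theta(\lambda)=\pm (A_1)_{r1}$, and regularity gives $(A_1)_{r1}=0$.

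\emph{Remaining entries: weights and shearing.} Give the entry $(i,j)$ the weight $j-i$, so that $J$ is homogeneous of weight $1$. Suppose $A_1\notin\b$. Among its nonzero subdiagonal entries, let $-m$ be the most negative weight, with $1\le m\le r-2$ by the first step, and let $L$ be the weight $-m$ part of $A_1$. Pass to the ramified variable $z=t^{m+1}$ and apply the shearing gauge $\operatorname{diag}(1,t,\dots,t^{r-1})$. An entry of weight $w$ in $z^nA_n$ then acquires the factor $t^{w+(m+1)n}$. In particular $J$ and $L$ both land in the same lowest order $t^{1}$, and every other contribution coming from $A_0$ and $A_1$ lands in strictly higher order.

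Assume for the moment that no contribution from $n\ge 2$ lands at order $\le 1$. Then the new leading coefficient is $N=J+L$, and the connection acquires a pole of order $>1$ in $t$ whose leading term is $N$. If $N$ is not nilpotent, the connection is irregular, which contradicts the hypothesis since ramification and gauge transformations preserve regularity. If $N$ is nilpotent, I would iterate, applying the Moser computation of the first step to $N$ in place of $J$, with a refined filtration.

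\emph{Main obstacle.} I expect two difficulties.
\begin{enumerate}
\item The terms $A_n$ with $n\ge 2$ may have deep subdiagonal entries of weight $w$ with $w+(m+1)n\le 1$, and then they pollute the leading order after the shearing.
\item $J+L$ can genuinely be nilpotent. For instance, when $r=3$ and $m=1$, $J+aE_{21}-aE_{32}$ is nilpotent, so the non-nilpotency argument alone cannot close the case.
\end{enumerate}
To handle both, I would first simplify the tail by a holomorphic gauge $g=\mathrm{I}_r+zX_1+z^2X_2+\cdots$. Solving $[J,X_k]$-type equations degree by degree, this pushes the higher coefficients into a complement of $\operatorname{im}(\operatorname{ad}J)$, for example the lower companion-type slice. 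Such a gauge leaves the class of $A_1$ modulo $\b$ essentially controlled, because it changes $A_1$ only by $[J,X_1]$, which is of weight $\ge 1-(r-1)$ and can be arranged to stay in $\b$.

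With the tail in this slice form, the system is equivalent to a scalar equation through the cyclic vector $\e_r$. I would then read off the irregularity from the Newton polygon of that scalar operator. The lowest nonzero subdiagonal weight of $A_1$ should produce a segment of positive slope, equal to $1/(m+1)$ or larger, which contradicts the Fuchs criterion. Making this Newton polygon computation precise, and showing that cancellations of the type in the $r=3$ example cannot kill the slope once the tail has been normalized, is the step I expect to be hardest.
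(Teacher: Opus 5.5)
\textbf{Verdict: the step you flag as hardest cannot be completed, because for $r\ge 3$ the statement is false.} What you do prove is correct. Your normalization and your Moser step are right: $\theta(\lambda)=\pm(K_{-1})_{r1}$, so regularity kills the corner entry. For $r=2$ this is already a complete proof, by a different route from the paper's (the paper passes to the scalar equation for the cyclic vector $\e_2$ and applies Fuchs' criterion).

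\textbf{Counterexample.} Let $J=\sum_{i<r}E_{i,i+1}$, $c\in\C^{\times}$ and $g=\mathrm{I}_r+czE_{31}$, so $g^{-1}=\mathrm{I}_r-czE_{31}$. Using $JE_{31}=E_{21}$, $E_{31}J=E_{32}$, $E_{32}E_{31}=E_{31}^2=0$, $E_{31}E_{11}=E_{31}$ and $E_{11}E_{31}=0$, formula~\eqref{eq:gauge_action} gives
\[
 \bigl(\d+(z^{-2}J+z^{-1}E_{11})\,\d z\bigr)\cdot g=\d+\bigl(z^{-2}J+z^{-1}(E_{11}+cE_{21}-cE_{32})\bigr)\d z .
\]
Here the $z^0$ terms $-cE_{31}$ and $g^{-1}g'=cE_{31}$ cancel. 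The left side is regular by Proposition~\ref{prop:converse}, and holomorphic gauge preserves regularity. So the right side is a regular connection with a pole of order $2$ and $K_{-2}=J$ regular in $\n$. Yet $K_{-1}=E_{11}+c(E_{21}-E_{32})\notin\b$.

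For $r=3$ you can check this by hand. With the cyclic vector $\e_3$ one finds $\nabla^3\e_3+5z^{-1}\nabla^2\e_3+4z^{-2}\nabla\e_3=0$, which satisfies~\eqref{eq:Fuchs}. The paper's own sketch for $r\ge 3$ fails for the same reason. The coefficients it identifies with $(K_{-1})_{21}=c$ and $(K_{-1})_{32}=-c$ (the $z^{-3}$ term in front of $\nabla\e_3$ and the $z^{-4}$ term in front of $\e_3$) both vanish here. In general they mix several entries of $K_{-1}$, and also entries of $K_0,K_1,\dots$.

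\textbf{Where your plan breaks.} The failing step is the claim that a holomorphic gauge $\mathrm{I}_r+zX_1+\cdots$ moves $K_{-1}$ only by $[J,X_1]$, ``which can be arranged to stay in $\b$''. For $X_1$ of weight $\le -2$ this is false; for example $[J,E_{31}]=E_{21}-E_{32}$. So the class of $K_{-1}$ modulo $\b$ is not a gauge invariant, and your obstacle~(2) is realized by an honest regular connection.

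\textbf{What regularity actually forces.} It forces $K_{-1}\in\b+[J,\gl_r]$, equivalently $\operatorname{tr}(J^kK_{-1})=0$ for $1\le k\le r-1$. In words, the entries on each subdiagonal of $K_{-1}$ sum to zero.
\begin{itemize}
\item \emph{Necessity} follows from Proposition~\ref{prop:main}. The normalizing gauge has $g(0)\in B$, since it conjugates $J$ to a regular element of $\n$ and hence preserves the kernel flag. Moreover $D_{-1}=g(0)^{-1}K_{-1}g(0)+[D_{-2},g(0)^{-1}g'(0)]\in\b$.
\item \emph{Sharpness:} every such $K_{-1}$ occurs. Gauge $\d+(z^{-2}J+z^{-1}b)\,\d z$, with $b\in\b$, by $\mathrm{I}_r+zX$; this produces $K_{-1}=b+[J,X]$.
\end{itemize}
For $r=3$ these conditions are $(K_{-1})_{31}=0$ and $(K_{-1})_{21}+(K_{-1})_{32}=0$. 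These are exactly what your Moser step and your non-nilpotency step already give, so no Newton polygon argument can extract more.
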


\begin{remark}
 As it is known (and will actually follow from Proposition~\ref{prop:main}~\eqref{item:nilpotent},~\eqref{item:conjugate} too), regularity implies that $K_{-2}$ is a nilpotent endomorphism. 
 Thus, assuming the first condition, the second condition holds up to changing the trivialization by a constant automorphism. 
\end{remark}

Our proof will consist in reducing differential modules to the higher order scalar case by finding a cyclic vector. 
We will make use of Fuchs' classical regularity criterion: let us be given a scalar differential operator of rank $r$ defined for a variable $y\in \mathbb{C} \llbracket z \rrbracket$, 
\[
 L (y) = y^{(r)} + a_1 (z) y^{(r-1)} + \cdots + a_r (z) y, 
\]
where $y^{(i)}$ stands for the $i$'th derivative with respect to $z$, and $a_i\in \mathbb{C} \llbracket z \rrbracket$ are fixed. 
Then $L$ has regular singularity at $0$ if and only if 
\begin{equation}\label{eq:Fuchs}
  \operatorname{val}_z (a_i) \geq - i 
\end{equation}
for all $1 \leq i \leq r$.

\begin{proof}
 We first deal with the case $r=2$. 
 First applying a constant change of basis $g_0\in \operatorname{GL}_r (\C )$ we may assume that 
 \[
  K_{-2} = \begin{pmatrix}
            0 & 1 \\
            0 & 0
           \end{pmatrix}. 
 \]
 Let us now write 
 \[
  K_{-1} = \begin{pmatrix}
            \alpha_{-1} & \beta_{-1} \\
            \gamma_{-1} & \delta_{-1}
           \end{pmatrix}. 
 \]
 \begin{lemma}\label{lem:gamma0}
  We have $\gamma_{-1} = 0$. 
 \end{lemma}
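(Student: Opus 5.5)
The plan is to reduce the rank-$2$ system to a single second-order scalar equation, by eliminating one unknown, and then to apply Fuchs' criterion~\eqref{eq:Fuchs}. Write the entries of $K$ as Laurent series
\[
 K=\begin{pmatrix} a & b\\ c & d\end{pmatrix},\qquad
 b=z^{-2}+\beta_{-1}z^{-1}+\cdots,\quad c=\gamma_{-1}z^{-1}+\gamma_0+\cdots,
\]
where $a,d$ have poles of order at most $1$; this uses the normalized form of $K_{-2}$. Horizontal sections $f_1\e_1+f_2\e_2$ of $\nabla$ are the solutions of the linear system
\[
 f_1'=-(af_1+bf_2),\qquad f_2'=-(cf_1+df_2).
\]

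The key point is that $b$ is invertible in $\C(\!(z)\!)$, and $1/b=z^2(1-\beta_{-1}z+\cdots)$ has valuation $2$. So I will eliminate $f_2$ rather than $f_1$. This avoids dividing by $c$, which might vanish identically. The first equation gives
\[
 f_2=-\frac{f_1'+af_1}{b}.
\]
Substituting this into the second equation and multiplying through by $b$, I expect to obtain
\[
 f_1''+\Bigl(a+d-\tfrac{b'}{b}\Bigr)f_1'+\Bigl(a'+ad-a\tfrac{b'}{b}-bc\Bigr)f_1=0 .
\]
Conversely, any solution $f_1$ of this scalar equation, together with $f_2$ defined by the displayed formula, solves the system. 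Hence $\e_1$ is in effect a cyclic vector, and the system and the scalar operator $L$ have the same solutions.

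Next I check the valuations term by term. Since $b'/b=-2z^{-1}+O(1)$, the coefficient of $f_1'$ has valuation at least $-1$. In the coefficient of $f_1$, the terms $a'$, $ad$ and $ab'/b$ all have valuation at least $-2$. The remaining term is
\[
 bc=\gamma_{-1}z^{-3}+O(z^{-2}).
\]
Thus the coefficient $a_2$ of $f_1$ in $L$ satisfies $\operatorname{val}_z(a_2)\ge -2$ if and only if $\gamma_{-1}=0$.

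It remains to justify that regularity of $\nabla$ forces regularity of $L$, so that Fuchs' criterion~\eqref{eq:Fuchs} gives $\gamma_{-1}=0$. This transfer is the step needing the most care. I would argue through the equivalence between "regular singularity" and "moderate growth of solutions" on sectors (via \cite{Malgrange}, or the standard results in \cite{Sabbah_Isomonodromic_deformations}). The solutions of $L$ are exactly the first components of horizontal sections of $\nabla$, so they have moderate growth, and therefore $L$ is regular. Alternatively, one can argue algebraically. The $\C(\!(z)\!)\langle\partial\rangle$-module generated by $\e_1$ is the whole differential module, because $\nabla\e_1$ has nonzero $\e_2$-component $c\,\e_2$ or, dually, one uses the adjoint form. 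Regularity is intrinsic to the differential module, and it is equivalent to the Fuchs condition for any cyclic presentation. If this equivalence has to be made explicit, I will follow the second route, possibly after passing to the dual connection so that the cyclic vector argument uses $b\neq 0$ directly.
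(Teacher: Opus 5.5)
Your proposal is correct and follows essentially the paper's route: both arguments reduce the rank-$2$ system to a second-order scalar equation using that the $(1,2)$-entry $z^{-2}+\cdots$ of $K$ is invertible, both observe that the only $z^{-3}$ term in the zeroth-order coefficient is $\gamma_{-1}z^{-3}$, and both conclude by Fuchs' criterion~\eqref{eq:Fuchs}. The only difference is that the paper uses $\e_2$ as a cyclic vector for $\nabla$, whereas your elimination of $f_2$ amounts to using the dual basis vector $\e_1^\vee$ as a cyclic vector for the dual connection.

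One small slip: $\e_1$ itself need not be cyclic for $\nabla$. Your justification via the component $c\,\e_2$ fails when $c\equiv 0$, which the conclusion allows. So if you make the transfer of regularity algebraic, do it for $\e_1^\vee$ and $\nabla^\vee$ (whose connection matrix is $-K^{T}$), as you suggest at the end.
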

 \begin{proof}[Proof of the Lemma]
  For simplicity, we write $\nabla$ for $\nabla_{\partial_z}$. 
  We have 
  \begin{align*}
   \nabla \e_2 & = (z^{-2} + O(z^{-1}) )\e_1 + O(z^{-1}) \e_2 \\ 
   \nabla \e_1 & = (\gamma_{-1} z^{-1} + O(1)) \e_2 + O(z^{-1}) \e_1 \\
   & = (\gamma_{-1} z^{-1} + O(1)) \e_2 + O(z) \nabla \e_2 .
  \end{align*}
  In particular, we find 
  \begin{align*}
   \nabla^2 \e_2 & = (-2 z^{-3} + O (z^{-2})) \e_1 + (z^{-2} + O(z^{-1}) ) ((\gamma_{-1} z^{-1} + O(1)) \e_2 + O(z) \nabla \e_2) \\
   & = (\gamma_{-1} z^{-3} + f(z)) \e_2 + g(z) \nabla \e_2
  \end{align*}
  for some 
  \[
  f(z) = O(z^{-2}), \quad g(z) = O(z^{-1}). 
  \]
  Now we choose the cyclic vector $\e_2$, i.e. we consider the trivialization $(\e_2, \nabla \e_2)$. 
  For any local section 
  \[
   \vec{y}(z) = y_0 (z) \e_2 + y_1 (z) \nabla \e_2 ,
  \]
  let us compute the system of equations expressing  $\nabla \vec{y} = 0$. 
  The coefficients of $\nabla \e_2$ and $\e_2$ give respectively 
  \begin{align}
      y_1' + g(z) y_1 + y_0 & = 0 \label{eq:parallel1} \\
      y_0' + y_1 (\gamma_{-1} z^{-3} + f(z)) & = 0. \label{eq:parallel2}
  \end{align}
  Plugging the differential of~\eqref{eq:parallel2} into~\eqref{eq:parallel1} yields 
  \[
   - y_1'' - g(z) y_1' + (\gamma_{-1} z^{-3} + f(z) - g'(z)) y_1 = 0. 
  \]
  So, we have reduced $\nabla$ to a second order scalar equation. 
  Now, given that $g(z) = O(z^{-1})$ we have $g'(z) = O(z^{-2})$ too. 
  By~\eqref{eq:Fuchs}, regularity is equivalent to 
  \[
    \operatorname{val}_z (g) \geq -1, \quad \operatorname{val}_z(\gamma_{-1} z^{-3} + O(z^{-2}))\geq -2. 
  \]
 We are given the first of these conditions by assumption. 
 The second one is equivalent to $\gamma_{-1} = 0$. 
 \end{proof}
 Let us now come to the case $r\geq 3$. 
 We may again assume that $K_{-2}$ is an upper triangular Jordan block of dimension $r$ for the  eigenvalue $0$. 
 We will just indicate the main steps of the proof, and leave the reader to work out the details along the lines of the rank $2$ case. 
 Let us denote the coefficients of $K_{-1}$ by $k_{ji}$. 
 We need to show that $k_{ji}=0$ for all $1 \leq i<j\leq r$. 
 It is easy to see that the vectors $\e_r, \nabla \e_r, \ldots, \nabla^{r-1} \e_r$ span $V$ over $\mathbb{C} (\! ( z )\!) $, so we may choose $\e_r$ as cyclic vector.
 After some computation, we find the expansion 
 \begin{align*}
  \nabla^r \e_r = & O(z^{-1}) \nabla^{r-1} \e_r + (k_{21} z^{-3} + O(z^{-2})) \nabla^{r-2} \e_r + \cdots  \\
  & \cdots + (k_{r1} z^{1-2r} + \cdots + k_{r,r-1} z^{-1-r} +  O(z^{-r})) \e_r,
 \end{align*}
 i.e. the first $i-1$ Laurent coefficients of the expansion of $\nabla^{r-i} \e_r$ are the elements in the $i$'th row of $K_{-1}$ strictly below the diagonal. 
 Fuchs' criterion~\eqref{eq:Fuchs} and the regularity assumption then give the desired vanishing statements. 
\end{proof}

\section{Normal form}\label{sec:normal}

Let $E = \mathcal{O}^{\oplus r}$ and $\nabla$ be a connection on $E$ with regular singularity at $0$.
Let $\v_1 (1), \ldots ,\v_r (1)$ be a frame of $E_{|1}$ whose flag is preserved by the matrix of the monodromy transformation $T_{\nabla}$ of $\nabla$, see assumption~(Cond3).
Let us write~\eqref{eq:connection_series} for the Laurent expansion of the  connection matrix of $\nabla$ with respect to $\e_1, \ldots ,\e_r$.
 Let $A\in \gl (E_{|1})$ be such that
 \begin{equation}\label{eq:expA}
    \exp (2\pi\sqrt{-1} A ) = T_{\nabla }.
 \end{equation}
 By assumption~(Cond3), the matrix of $A$ with respect to $(\v_1 (1), \ldots ,\v_r (1))$ is upper triangular.
 By assumption~(Cond1), it follows from~\cite{Deligne} that there exists a $\C (\!(z)\!)$-basis $(\v'_1, \v'_2 \ldots , \v'_r)$ of $E\otimes \C (\!(z)\!)$ with respect to which the connection matrix of $\nabla$ is $A\d\! z/z$ and such that
 \begin{equation}\label{eq:Id@1}
     \v'_i (1) = \v_i (1).
  \end{equation}
 The lattice $E'$ spanned by the frame $(\v'_1, \v'_2 \ldots , \v'_r)$ is in general different from $E$, but the meromorphic bundles agree:
 \[
  \C (\!(z)\!) \e_1 \oplus \cdots \oplus \C (\!(z)\!) \e_r = E\otimes \C (\!(z)\!) = \C (\!(z)\!) \v'_1 \oplus \cdots \oplus \C (\!(z)\!) \v'_r.
 \]
 It follows that there exists some $n \in \mathbb{Z}$ such that $z^n \v'_1$ belongs to the original lattice $E$ at $0$.
 We now let $n_1$ stand for the smallest such integer $n$, and set $\w_1 = z^{n_1}\v'_1\in E$.
 Then, the matrix of $\nabla$ with respect to $(\w_1, \v'_2 \ldots , \v'_r)$ reads as $(A' + n_1 E_{11})\d\! z/z$, where $E_{ij}$ has $(i,j)$-entry $1$ and all other entries $0$.
 In particular, this matrix is still upper triangular.
 Now, we recursively define
 \[
  n_i = \min \{ n \in \mathbb{Z} \colon \;  z^n \v'_i\in E + \C (\!(z)\!) \w_{i-1} + \cdots + \C (\!(z)\!) \w_1\}.
 \]
 \begin{proposition}\label{lem:frame}
 There exist unique $b_{1i},\ldots , b_{i-1,i}\in z^{-1} \C [ z^{-1} ]$ and unique $\w_i \in E$ satisfying the equality
  \begin{equation}\label{eq:vi}
  z^{n_i} \v'_i = \w_i -  b_{i-1,i} \w_{i-1}  - \cdots -  b_{1i} \w_1.
 \end{equation}
 In addition, the span of $\w_1, \ldots, \w_r$ over $\mathcal{O}$ agrees with $E$ .
 \end{proposition}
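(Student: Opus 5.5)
We argue by induction on $i$, working at the germ level at $0$: $E$ is a free $\C\{z\}$-module (or, after completion, a $\C \llbracket z \rrbracket$-module), and the quantities involved only see the stalk at $0$. Away from $0$ the frames $(\v'_j)$ and $(\e_j)$ are holomorphic and related by an invertible holomorphic matrix, so nothing needs checking there. We prove, for each $i$, the following statement $(P_i)$: the vectors $\w_1,\ldots,\w_i$ are defined, and they form an $\O$-basis of the saturated submodule
\[
E_i := E\cap \bigl(\C (\!(z)\!)\v'_1\oplus\cdots\oplus \C (\!(z)\!)\v'_i\bigr).
\]
Note that the $\C (\!(z)\!)$-span of $\w_1,\ldots,\w_{i-1}$ equals that of $\v'_1,\ldots,\v'_{i-1}$. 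The case $i=1$ follows from the definition of $n_1$. Indeed, $E_1$ is a saturated rank one submodule of the free module $E$ over a DVR, hence free of rank one, say $E_1=\O\,\mathbf u$ with $\mathbf u = z^m \phi\,\v'_1$ for some unit $\phi$. By minimality $m=n_1$, so $\w_1=z^{n_1}\v'_1$ is a generator, the unit being harmless.

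For the inductive step, assume $(P_{i-1})$. The quotient $E/E_{i-1}$ is torsion free, hence free, and $E_i/E_{i-1}$ is a saturated rank one submodule of it. The image of $\v'_i$ lies in $(E/E_{i-1})\otimes\C (\!(z)\!)$, and $n_i$ is exactly the smallest $n$ such that $z^n\v'_i$ becomes integral modulo $\C (\!(z)\!)$-combinations of $\w_1,\dots,\w_{i-1}$. Hence $z^{n_i}\v'_i$ maps to a generator of $E_i/E_{i-1}$, up to a unit as before. The claim that it is exactly a generator, and not merely a generator times a unit, follows because $z^{n_i}\v'_i$ itself lies in $E$ modulo the span of $\w_1,\ldots,\w_{i-1}$, and any element of $E_i/E_{i-1}$ of the form $z^{n}\v'_i$ has $n\ge n_i$.

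\emph{Existence.} Write $z^{n_i}\v'_i = \mathbf x + \sum_{j<i} a_j \w_j$ with $\mathbf x\in E$ and $a_j\in\C (\!(z)\!)$. Split each $a_j$ into its polar part $a_j^-\in z^{-1}\C[z^{-1}]$ and its regular part $a_j^+\in\C \llbracket z \rrbracket$. Absorb $\sum_j a_j^+\w_j$ into $\mathbf x$, and set $b_{ji}=-a_j^-$ and $\w_i=\mathbf x+\sum_j a_j^+\w_j\in E$. This gives~\eqref{eq:vi}. Here there is a subtle point: if we work over the convergent ring, we must check that $a_j^+$ is convergent. This holds because $\mathbf x$ and the $\w_j$ are convergent, and the polar parts are polynomials in $z^{-1}$, so $a_j^+ = a_j - a_j^-$ is convergent as well. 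The identity $z^{n_i}\v'_i - \sum_j a_j\w_j = \mathbf x$ then shows that the $a_j$ are meromorphic, since they are the coordinates of a meromorphic vector in a meromorphic frame.

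\emph{Uniqueness.} Suppose two such decompositions exist. Their difference gives $\w_i-\tilde\w_i=\sum_{j<i} c_j\w_j$ with $c_j\in z^{-1}\C[z^{-1}]$. The left-hand side lies in $E_{i-1}$, which by $(P_{i-1})$ is freely generated over $\O$ by $\w_1,\ldots,\w_{i-1}$. So the $c_j$ must be holomorphic, and since they are purely polar this forces $c_j=0$.

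\emph{Basis property.} We show that $\w_1,\ldots,\w_i$ generate $E_i$. Take $\mathbf y\in E_i$. Modulo $E_{i-1}$, it is an $\O$-multiple $h$ of the generator $z^{n_i}\v'_i$, and hence also $h\w_i$ modulo $\C (\!(z)\!)\w_{<i}$. Thus $\mathbf y-h\w_i\in E\cap\C (\!(z)\!)\w_{<i}=E_{i-1}$, and we conclude by $(P_{i-1})$. Taking $i=r$ gives $E_r=E$, which is the final claim.

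The main obstacle is the step identifying $z^{n_i}\v'_i$ as a generator of $E_i/E_{i-1}$. This is where the minimality in the definition of $n_i$ must be translated into saturation. I would handle it through the structure of torsion-free modules over the DVR $\O_0$, applied to the quotient $E/E_{i-1}$, exactly as in the case $i=1$.
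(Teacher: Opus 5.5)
Your proof is correct, but you organize it differently from the paper.

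\textbf{Your route.} You strengthen the induction hypothesis to the statement that $\w_1,\ldots,\w_i$ is an $\O$-basis of the saturated submodule $E_i=E\cap(\C (\!(z)\!)\v'_1\oplus\cdots\oplus\C (\!(z)\!)\v'_i)$. You then convert the minimality of $n_i$ into the fact that $E_i/E_{i-1}$ is generated by the class of $\w_i$; this uses that it is a rank one torsion-free module over the DVR. With this in hand, uniqueness is immediate: a purely polar combination of $\w_1,\ldots,\w_{i-1}$ lying in $E_{i-1}$ must vanish. The spanning claim is then just the case $i=r$.

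\textbf{The paper's route.} The existence step is the same as yours (splitting off polar parts). Uniqueness and spanning, however, are proved by two separate direct contradictions with minimality.
\begin{itemize}
\item For uniqueness, it takes the largest $j$ with $b_{ji}\neq\tilde b_{ji}$ and writes the difference as $z^{-m_j}P(z)$ with $P(0)\neq 0$. Substituting the relation for $\w_j$ and dividing by $P$ gives $z^{n_j-m_j}\v'_j\in E+\C (\!(z)\!)\w_{<j}$.
\item For spanning, it takes the largest $i_0$ with $e_{i_0}\notin\C\llbracket z\rrbracket$ in some $\sum e_i\w_i\in E$.
\end{itemize}

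\textbf{Comparison.} The core idea, that minimality of $n_j$ amounts to saturation, is shared. The paper's version is more elementary and needs no structure theory of modules. Yours packages everything into a single induction and yields the more intrinsic fact that $(\w_i)$ is an adapted basis of the filtration $E_1\subset\cdots\subset E_r=E$ cut out by the flag of $(\v'_i)$. It is also more careful than the paper about the convergent versus formal setting.

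\textbf{Minor wording.} $z^{n_i}\v'_i$ does not itself lie in $E_i$. So the claim that it ``maps to a generator of $E_i/E_{i-1}$'' should be read inside $\C (\!(z)\!)\v'_{\le i}/\C (\!(z)\!)\v'_{<i}$, into which $E_i/E_{i-1}$ injects. Your extra sentence about units is unnecessary, since a unit multiple of a generator is still a generator.
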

 \begin{proof}
 We apply induction. The base case $i=1$ has been explained above.
 So, assume that $\w_1, \ldots, \w_{i-1}$ with the given properties exist.
 By definition, there exist $\mathbf{u}_i\in E$ and $c_{1i}, \ldots , c_{i-1,i} \in \C (\!(z)\!)$ such that
 \[
  z^{n_i} \v'_i = \mathbf{u}_i - c_{i-1,i} \w_{i-1}  - \cdots - c_{1i} \w_1.
 \]
 Now, for all $j<i$ let us take the unique $b_{ji}\in z^{-1} \C [ z^{-1} ]$ such that
 \[
    c_{ji} - b_{ji} \in \C \llbracket z \rrbracket .
 \]
 By this and $\w_1, \ldots, \w_{i-1} \in E$, it follows that
 \[
  c_{i-1,i} \w_{i-1} + \cdots + c_{1i} \w_1  - (b_{i-1,i} \w_{i-1} + \cdots + b_{1i} \w_1 ) \in E
 \]
 because $E$ is a $\C \llbracket z \rrbracket$-module.
 Let us now set
  \[
  \w_i = \mathbf{u}_i + b_{i-1,i} \w_{i-1} + \cdots +  b_{1i} \w_1 - (c_{i-1,i} \w_{i-1} + \cdots + c_{1i} \w_1 ) \in E.
 \]
 Rearranging terms, we then get~\eqref{eq:vi}. This shows existence.

 Now, let
 \begin{align*}
  z^{n_i} \v'_i & = \w_i - b_{i-1,i} \w_{i-1} - \cdots -  b_{1i} \w_1  \\
  & = \tilde{\w}_i - \tilde{b}_{i-1,i} \w_{i-1} - \cdots - \tilde{b}_{1i} \w_1
 \end{align*}
 be another such decomposition for some $\tilde{\w}_i \in E$ and $\tilde{b}_{i-1,i} ,\ldots , \tilde{b}_{1i} \in z^{-1} \C [ z^{-1} ]$.
 Rearranging terms, we find
 \[
  \w_i - \tilde{\w}_i = (b_{i-1,i} -\tilde{b}_{i-1,i}) \w_{i-1} + \cdots + (b_{1i} - \tilde{b}_{1i}) \w_1 .
 \]
 Assume that at least one of the coefficients $b_{ji} - \tilde{b}_{ji}$ on the right hand side does not vanish, and let us fix the highest value of $j$ such that this is the case.
 We may then rewrite the above equality as
 \[
  (b_{j, i} - \tilde{b}_{j, i}) \w_j  = \w_i - \tilde{\w}_i - \sum_{k=1}^{j-1} (b_{k, i} - \tilde{b}_{k, i}) \w_k .
 \]
 Plugging
 \[
   \w_j = z^{n_j} \v'_j +  b_{j-1,j} \w_{j-1} + \cdots + b_{1j} \w_1
 \]
 to the left hand side, we get
 \[
  (b_{j, i} - \tilde{b}_{j, i}) z^{n_j} \v'_j = \w_i - \tilde{\w}_i - \sum_{k=1}^{j-1} d_{k, i} \w_k ,
 \]
 where we have set
 \[
  d_{k, i} = - b_{k, i} + \tilde{b}_{k, i} - (b_{j, i} - \tilde{b}_{j, i}) b_{kj}.
 \]
 Given that $b_{k, i} , \tilde{b}_{k, i} \in z^{-1} \C [ z^{-1} ]$, we also have $d_{k, i} \in z^{-1} \C [ z^{-1} ]$.
 Since $b_{j, i} - \tilde{b}_{j, i} \in z^{-1} \C [ z^{-1} ]$ is non-vanishing,  we may uniquely write it in the form
 \[
  b_{j, i} - \tilde{b}_{j, i} = z^{-m_j} P(z)
 \]
 with $m_j \in \mathbb{Z}_+$ and $P\in \C [z]$ such that $P(0)\neq 0$.
 Then, we can rewrite
 \[
  z^{n_j - m_j} \v'_j = P^{-1}(z) ( \w_i - \tilde{\w}_i )  - \sum_{k=1}^{j-1} P^{-1}(z) d_{k, i} \w_k .
 \]
 Now, we have $\w_i - \tilde{\w}_i \in V$ and $P^{-1}\in \C \llbracket z \rrbracket$.
 Since $E\otimes \llbracket z \rrbracket$ is an $\C \llbracket z \rrbracket$-module, we have $P^{-1}(z) ( \w_i - \tilde{\w}_i ) \in E$, contradicting the minimal choice of $n_j$.
 We conclude that none of the coefficients $b_{ji} - \tilde{b}_{ji}$ may be non-trivial.
 This finishes the proof of uniqueness.

 Finally, let us show that the lattice spanned by $\w_1, \ldots, \w_r$ is $E$.
 Since $E$ is an $\mathcal{O}$-module and $\w_i \in E$, the lattice spanned by $\w_1, \ldots, \w_r$ is trivially contained in $E$, so there only remains to prove the converse direction.
 Let us now assume by contradiction that there exists some $e_1, \ldots , e_r\in \C (\!(z)\!)$, not all of them belonging to $\C \llbracket z \rrbracket$, such that
 \[
  \e = \sum_{i=1}^r e_i \w_i \in E.
 \]
 Let $1 \leq i_0 \leq r$ be the largest integer $i$ such that $e_i \notin \C \llbracket z \rrbracket$.
 Then,
 \[
  \sum_{i=i_0 + 1}^r e_i \w_i \in E
 \]
 because $E$ is a lattice, $\w_i \in E$ and $e_i \in \C \llbracket z \rrbracket$ for $i_0 < i \leq r$.
 Since $E$ is closed under addition, we get
 \[
  \sum_{i=1}^{i_0} e_i \w_i \in E.
 \]
 Comparing with~\eqref{eq:vi}, we get that
 \begin{align*}
  e_{i_0} z^{n_i} \v'_i & = e_{i_0} \w_{i_0} - e_{i_0} b_{i_0-1,i_0} \w_{i_0-1} - \cdots  e_{i_0} b_{i_0-1,1} \w_1 \\
  & = \sum_{i=1}^{i_0} e_i \w_i - \sum_{i=1}^{i_0-1} (e_i -  e_{i_0} b_{i_0-1,i}) \w_i
 \end{align*}
  Now, the latter vector belongs to $E + \C (\!(z)\!) \w_{i-1} + \cdots + \C (\!(z)\!) \w_1$.
 However, as $e_i \notin \C \llbracket z \rrbracket$, we have $\operatorname{val}_z (e_{i_0} z^{n_i}) < n_i$, contradicting the minimal choice of $n_i$.
 \end{proof}

  Let us define $g\in \operatorname{Aut}(E)$ to be the gauge transformation defined by $\w_i  \cdot g = \e_i$, where $\w_i$ is the frame constructed in Proposition~\ref{lem:frame}.
  Notice that then $T_{\nabla\cdot g}$ is the monodromy transformation of $\nabla$ with respect to $(\w_1, \ldots , \w_r)$.

\begin{proposition}\label{prop:main}
  For this gauge transformation $g$, the connection matrix of $\nabla\cdot g$ reads as
 $$
   D \d\! z  = \sum_{n=-N}^{N'} D_n z^n \d\! z 
 $$
 for some $N'\in \Z_+$ (and the same value of $N$ as in~\eqref{eq:connection_series}), fulfilling
\begin{enumerate}
 \item $D_n\in\n$ for every $-N\leq n< -1$, \label{item:nilpotent}
 \item $D_n\in\b$ for every $-1 \leq n\leq N'$, \label{item:Borel}
 \item the monodromy transformation $T_{\nabla\cdot g}$ of $\nabla\cdot g$ leaves invariant the flag defined by $\e_1(1), \ldots ,\e_r(1)$,
 \item the $i$'th eigenvalue of $T_{\nabla\cdot g}$ is equal to 
 \[
  \exp (2\pi \sqrt{-1} \mu_i), 
 \]
 where $\mu_i = \operatorname{res}_0 d_{ii}$ and $d_{ii}$ denotes the $i$'th diagonal entry of $D$,  \label{item:monodromy}
 \item $D_{-N}$ is conjugate to $K_{-N}$.\label{item:conjugate}
\end{enumerate}
\end{proposition}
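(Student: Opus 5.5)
The plan is to compute the connection matrix $D$ of $\nabla$ in the frame $(\w_1,\ldots,\w_r)$ of Proposition~\ref{lem:frame} directly from the relation~\eqref{eq:vi}. Write it as $(z^{n_1}\v'_1,\ldots,z^{n_r}\v'_r) = (\w_1,\ldots,\w_r)\,U$, where $U = \mathrm{I}_r - (b_{ji})_{j<i}$ is unipotent upper triangular. Its off-diagonal entries lie in $z^{-1}\C[z^{-1}]$, so $U$ is a \emph{finite} Laurent polynomial in $z^{-1}$, and $U^{-1}$ has the same form. In the frame $(z^{n_i}\v'_i)$ the connection matrix is
\[
\bigl(A + \operatorname{diag}(n_1,\ldots,n_r)\bigr)\frac{\d\! z}{z}, \qquad A \text{ upper triangular by (Cond3)}.
\]
Changing frames by $U^{-1}$ then gives
\[
D = U\Bigl(A+\operatorname{diag}(n_i)\Bigr)z^{-1}U^{-1} + U\,\partial_z(U^{-1}).
\]
Every term is upper triangular, so $D\in\b\otimes\C[z,z^{-1}]$, and moreover it only involves nonpositive powers of $z$. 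On the other hand, since the $\w_i$ span $E$, $D$ is the matrix of $\nabla\cdot g$ with $g\in\operatorname{Aut}(E)$ holomorphic and invertible on $\Delta$. It is therefore $g^{-1}Kg+g^{-1}g'$, which has pole order at most $N$ and is holomorphic away from $0$. Hence $D$ is a Laurent polynomial with terms in degrees $-N\le n\le N'$ (in fact $N'$ can be taken $\le 0$), and all $D_n\in\b$. This proves~\eqref{item:Borel}.

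For the diagonal, both $U$ and $U^{-1}$ are unipotent upper triangular. So the diagonal of $D$ equals the diagonal of $(A+\operatorname{diag}(n_i))z^{-1}$: indeed $U\partial_z(U^{-1})$ is strictly upper triangular, and conjugating an upper triangular matrix by a unipotent one preserves its diagonal. Thus $d_{ii} = (a_{ii}+n_i)/z$. Consequently $D_n\in\n$ for $n\ne -1$, giving~\eqref{item:nilpotent}, and $\mu_i=\operatorname{res}_0 d_{ii}=a_{ii}+n_i$. Since $\exp(2\pi\sqrt{-1}a_{ii})=c_i$ and $n_i\in\Z$, this gives the eigenvalue claim~\eqref{item:monodromy}.

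For the flag invariance, the frame $(\w_i)$ is related to $(\v'_i)$ by the upper triangular matrix $U^{-1}\operatorname{diag}(z^{n_i})$, which is single valued on $\Delta\setminus\{0\}$. Monodromy of $\nabla$ in the frame $\v'_i$ is $\exp(2\pi\sqrt{-1}A)$ (upper triangular), so in the frame $\w_i$ it is conjugate by an upper triangular matrix evaluated at $1$. It therefore stays upper triangular, i.e.\ it preserves the flag of $\e_1(1),\ldots,\e_r(1)$ after applying $g$. Equivalently, $\operatorname{span}(\w_1,\ldots,\w_k)\otimes\C(\!(z)\!)=\operatorname{span}(\v'_1,\ldots,\v'_k)\otimes\C(\!(z)\!)$ is a $\nabla$-invariant flag of meromorphic subbundles.

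Finally, for~\eqref{item:conjugate}: since $g$ is holomorphic and invertible at $0$, the leading coefficient transforms as $D_{-N}=g(0)^{-1}K_{-N}g(0)$ (the $g^{-1}g'$ term is holomorphic), provided $N\ge 2$. This is also what shows that $D$ has pole order exactly $N$. The main obstacle is bookkeeping. One has to check carefully that the pole order of $D$ computed from the $U$-expression is indeed bounded by $N$; this follows from the holomorphic-gauge description rather than the $U$-formula. One also has to check the direction conventions of the gauge action~\eqref{eq:gauge_action} for frames $\w_i\cdot g=\e_i$.
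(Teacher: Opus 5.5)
Your route is essentially the paper's. Your closed formula $D=UMU^{-1}+U\partial_z(U^{-1})$ is the matrix form of the paper's column-by-column induction computing $\nabla(\w_i)$ from \eqref{eq:vi}. The flag and eigenvalue argument at $z=1$ is the same. So is the use of holomorphic invertibility of $g$ to get $\operatorname{val}_z(D)\geq -N$ and $D_{-N}=g(0)^{-1}K_{-N}g(0)$.

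There is one computational error. The connection matrix in the frame $(z^{n_1}\v'_1,\ldots,z^{n_r}\v'_r)$ is not $(A+\operatorname{diag}(n_i))\,\d z/z$. It is
\[
M\,\d z=\Bigl(\operatorname{diag}(z^{-n_i})\,A\,\operatorname{diag}(z^{n_i})+\operatorname{diag}(n_i)\Bigr)\frac{\d z}{z},
\]
whose $(j,i)$ entry for $j<i$ is $a_{ji}z^{n_i-n_j-1}$. Since $n_i-n_j$ can be $\geq 2$, two of your claims are false: that $D$ contains only nonpositive powers of $z$, and that $N'\le 0$.

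For example, take $K=\begin{pmatrix}\frac12 z^{-1} & z^{-2}+z\\ 0 & \frac52 z^{-1}\end{pmatrix}$, which is regular by Proposition~\ref{prop:converse}, with $\v_1(1)=\e_1(1)$ and $\v_2(1)=\e_2(1)+\frac13\e_1(1)$. Then:
\begin{itemize}
\item the Deligne frame is $\v'_1=\e_1$, $\v'_2=z^{-2}(\e_2+\frac13 z^{-1}\e_1)$, with $A=\begin{pmatrix}\frac12 & 1\\ 0 & \frac12\end{pmatrix}$;
\item one gets $n_1=0$, $n_2=2$ and $b_{12}=-\frac13 z^{-1}$;
\item hence $\w_i=\e_i$, so $D=K$ and $N'=1$.
\end{itemize}
This is why the statement only asserts $N'\in\Z_+$.

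The slip does not affect items (1)--(5). The corrected $M$ is still upper triangular, with Laurent polynomial entries and the same diagonal $(a_{ii}+n_i)/z$. So the following go through exactly as you wrote:
\begin{itemize}
\item $D\in\b\otimes\C[z^{\pm1}]$;
\item $d_{ii}=(a_{ii}+n_i)/z$;
\item $D_n\in\n$ for $n\neq -1$;
\item $\mu_i=a_{ii}+n_i$.
\end{itemize}
The range $-N\le n\le N'$ follows from finiteness of $D$ together with the bound coming from $g^{-1}Kg+g^{-1}g'$. Just delete the remark about nonpositive powers and the parenthetical about $N'$.

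Also, your restriction $N\geq 2$ in the last step is unnecessary. The term $g^{-1}g'$ is holomorphic, so it never contributes to $D_{-N}$ for any $N\geq 1$.
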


\begin{remark}
 For $N=1$, the statement is known as Levelt's normal form~\cite[Exercise~2.20]{Sabbah_Isomonodromic_deformations}. 
\end{remark}

\begin{proof}

We start by showing that $D$ is indeed a polynomial:

 \begin{lemma}
   For the frame $\w_i$ constructed in Proposition~\ref{lem:frame}, $\nabla (\w_i )$ is a linear combination of $\w_1, \ldots ,\w_i$ with coefficients in $\C [z^{\pm 1}]\d\! z$, and the diagonal entries have a pole of order at most $1$.
 \end{lemma}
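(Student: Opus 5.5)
The idea is to compute $\nabla(\w_i)$ directly from the relation~\eqref{eq:vi}, inverted so as to express $\w_i$ through the frame $\v'_1,\ldots,\v'_r$, on which the connection is explicitly known. We argue by induction on $i$.

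First we rewrite~\eqref{eq:vi} as
\[
\w_i = z^{n_i}\v'_i + \sum_{j<i} b_{ji}\w_j .
\]
Substituting the analogous expressions for $\w_j$, $j<i$, recursively gives
\[
\w_i = \sum_{j\le i} p_{ji}\, z^{n_j}\v'_j ,
\]
where $p_{ii}=1$ and each $p_{ji}\in\C[z^{-1}]$, since $\C[z^{-1}]$ is a ring. In other words, the transition matrix $P$ from $(z^{n_1}\v'_1,\ldots,z^{n_r}\v'_r)$ to $(\w_1,\ldots,\w_r)$ is upper unitriangular with entries in $\C[z^{-1}]$. Its inverse is therefore also upper unitriangular with entries in $\C[z^{-1}]$: one can use the finite Neumann series $(I+N)^{-1}=\sum_k(-N)^k$ for nilpotent $N$.

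Next we record the connection on the rescaled frame. On $(\v'_1,\ldots,\v'_r)$ the connection matrix is $A\,\d z/z$ with $A$ constant and upper triangular. After rescaling to $z^{n_j}\v'_j$ it becomes
\[
\bigl(\operatorname{diag}(z^{-n})\,A\,\operatorname{diag}(z^{n}) + \operatorname{diag}(n)\bigr)\frac{\d z}{z}.
\]
The entries are $a_{jk}z^{n_k-n_j-1}$ together with diagonal terms $(a_{jj}+n_j)/z$. So this matrix is again upper triangular, its entries lie in $\C[z^{\pm1}]$, and its diagonal has at most a simple pole.

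Now we apply the gauge formula~\eqref{eq:gauge_action}. The matrix of $\nabla$ in the frame $(\w_i)$ is
\[
P^{-1}\Bigl(\operatorname{diag}(z^{-n})\,A\,\operatorname{diag}(z^{n}) + \operatorname{diag}(n)\Bigr)\frac{\d z}{z}\,P + P^{-1}\d P .
\]
This is a product and sum of upper triangular matrices with Laurent polynomial entries. Hence $\nabla\w_i$ is a combination of $\w_1,\ldots,\w_i$ with coefficients in $\C[z^{\pm1}]\d z$.

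For the diagonal entries, we use that conjugating an upper triangular matrix by an upper unitriangular one leaves the diagonal unchanged. The term $P^{-1}\d P$ is strictly upper triangular, because $\d P$ is strictly upper triangular. So the $i$-th diagonal entry equals $(a_{ii}+n_i)\,\d z/z$, which has a pole of order at most $1$. As a byproduct, its residue is $a_{ii}+n_i$, which is what item~\eqref{item:monodromy} will later use.

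We expect no step to be a genuine obstacle. The only point needing care is bookkeeping: we must check that the recursive substitution really produces entries in $\C[z^{-1}]$ rather than arbitrary Laurent series, which is exactly what the choice $b_{ji}\in z^{-1}\C[z^{-1}]$ in Proposition~\ref{lem:frame} guarantees. We must also check that the frame $(\w_i)$ is genuinely a $\C(\!(z)\!)$-basis, so that the matrix is well-defined. This holds because $P$ is unitriangular and $z^{n_j}\v'_j$ is a basis.
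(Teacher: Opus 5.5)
Your argument is correct and is essentially the paper's proof written in matrix form. The paper inducts on $i$: it applies the Leibniz rule to $\w_i = z^{n_i}\v'_i + \sum_{j<i} b_{ji}\w_j$ and re-expresses $z^{n_i}\nabla\v'_i$ through $\w_1,\ldots,\w_i$. You package the same two ingredients (the upper unitriangular transition matrix with entries in $\C[z^{-1}]$, and the upper triangular connection matrix $A\,\d z/z$ on $(\v'_j)$) into a single application of~\eqref{eq:gauge_action}. This makes the diagonal entries $(a_{ii}+n_i)\,\d z/z$, hence~\eqref{eq:di}, immediate, and avoids the index bookkeeping of the paper's version (where, e.g., the sum in~\eqref{eq:second_term} should run over $j\le i$).
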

 \begin{proof}[Proof of the Lemma]
 We apply induction on $i$. 
 The base case $i=1$ follows from 
 \begin{align*}
  \nabla (\w_1 ) & = \nabla ( z^{n_1} \v'_1 ) \\
  & = \frac{a_{11} + n_1}z \w_1 \d\! z. 
 \end{align*}
 For the induction step, applying the Leibniz formula to the lemma, we find 
 \[
  \nabla (\w_i ) = n_i \frac{\d\! z}{z} \w_i + z^{n_i} \nabla ( \v'_i ) +  \nabla (  b_{i-1,i} \w_{i-1} + \cdots +  b_{1i} \w_1). 
 \]
 By the induction hypothesis, the third term is a linear combination of $\w_1 , \ldots , \w_{i-1}$ with coefficients in $\C [z^{\pm 1}]\d\! z$. 
 By the assumption that $A$ preserves the flag, the second term reads as 
 \begin{equation}\label{eq:second_term}
  z^{n_i} \sum_{j<i} a_{ji} \v'_j \frac{\d\! z}{z} .  
 \end{equation}
 By induction, and using the upper-triangular nature of~\eqref{eq:vi}, this vector can be rewritten as 
 \begin{equation}\label{eq:crucial_step}
  (a_{ii} \w_i + c_{i-1,i} \w_{i-1} + \cdots +  c_{1i} \w_1 ) \frac{\d\! z}{z}  
 \end{equation}
 for some $c_{ji}\in \C [z^{\pm 1}]$, for $j<i$. 
 Summarizing, we see that 
 \begin{equation}\label{eq:recurrence} 
  \nabla (\w_i ) = (a_{ii} + n_i) \w_i \frac{\d\! z}{z} + d_{i-1,i} \w_{i-1} \d\! z + \cdots +  d_{1i} \w_1 \d\! z
  \end{equation}
 for some $d_{ji}\in \C [z^{\pm 1}]$, for $j<i$. 
 \end{proof}
 The lemma tells us that with respect to the system of vectors $(\w_1, \ldots , \w_r)$ in $E\otimes_{\C \llbracket z \rrbracket}\C (\!(z)\!)$, the connection matrix $D = (d_{ji})$ of $\nabla$ satisfies $d_{ji}\in \C [z^{\pm 1}]$ and $d_{ji}=0$ for $j>i$.
 Furthermore,~\eqref{eq:recurrence} shows that $\operatorname{val}_z (d_{ii})\geq -1$ and 
 \begin{equation}\label{eq:di}
  \operatorname{res}_0 d_{ii} = a_{ii} + n_i. 
 \end{equation}
 
 By~\eqref{eq:Id@1},~\eqref{eq:vi}, an upper triangular matrix $g$ links the bases
 \[
  (\v'_1(1), \ldots , \v'_r(1)) = (\v_1(1), \ldots , \v_r(1))
 \]
 and 
 \[
  (\w_1(1), \ldots , \w_r(1)).
 \] 
 In addition, its diagonal entries at $z=1$ are equal to $1^{n_i} = 1$. 
 By~(Cond3), $T$ preserves the flag defined by $(\w_1(1), \ldots , \w_r(1))$. 
 Moreover, by~\eqref{eq:expA} the eigenvalue of $T$ corresponding to $\w_i(1)$ is equal to 
 \[
 \exp (2\pi \sqrt{-1} a_{ii}), 
 \] 
 which is in turn equal to 
 \[
  \exp (2\pi \sqrt{-1} \operatorname{res}_0 d_{ii}) = \exp (2\pi \sqrt{-1} \mu_i) 
 \]
 by~\eqref{eq:di}.
 
 There only remains to show that $\operatorname{val}_z (d_{ji})\geq -N$ for every $j<i$. 
 Now, since $(\e_1, \ldots , \e_r)$ and $(\w_1, \ldots , \w_r)$ are both lattices of $E$, the gauge transformation $g\in \operatorname{Aut}(E)$ such that
 \begin{equation}\label{eq:wiei}
  (\w_1, \ldots ,\w_r) \cdot g = (\e_1, \ldots ,\e_r)
 \end{equation}
 is invertible (i.e., satisfies $\operatorname{val}_z (\det (g)) = 0$).
 By invertibility of $g$, we have $\operatorname{val}_z ( g^{-1}\d\! g) \geq 0$ and $\operatorname{val}_z (g^{-1} K g)= \operatorname{val}_z (K)=-N$. 
 We conclude using~\eqref{eq:gauge_action} that $\operatorname{val}_z (d_{ji})\geq -N$ for every $j<i$, and also that $g(0)^{-1} K_{-N} g(0) = D_{-N}$. 
\end{proof}

\begin{remark}
 The intuitive idea behind the proof of points~\eqref{item:nilpotent},~\eqref{item:Borel} of the theorem can be quite neatly summarized. 
 Namely, over the general point $\operatorname{Spec} \C (z)$ there exists a flag (i.e. a point in $\operatorname{GL} (r, \C )/B$) that is invariant under $T$: such a flag is given by a lattice $(\v'_1, \ldots , \v'_r)$, as in the proof. 
 Now, the flag variety $\operatorname{GL} (r, \C )/B$ is a proper scheme over $\C$. 
 By the valuative criterion for properness~\cite[Section~7.3]{EGAII}, the invariant flag then lifts to $\operatorname{Spec} \C [ z ]$. 
 Let $B_0\subset \operatorname{GL} (r, \C )$ be the flag obtained by base change to $\operatorname{Spec} \C [ z ]/(z)$ of the lift. 
 Invariance then means that the connection matrix must belong to $B_0$. 
 By general theory of regular singularities, the matrix coefficients $D_n$ for $n<-1$ must be nilpotent. 
 This argument gives a more conceptual proof of points~\eqref{item:nilpotent},~\eqref{item:Borel}. 
\end{remark}

Let us now give a partial converse to Proposition~\ref{prop:main}, even though it is fairly straightforward. 

\begin{proposition}\label{prop:converse}
 Let $\nabla$ be a connection on $V$ admitting connection matrix 
 $$
    \nabla = \d + \sum_{n=-N}^{N'} D_n z^n \d\! z 
 $$
 with respect to some $\O$-basis $(\v_1, \ldots ,\v_r)$ of $V$. Assume that 
 \begin{enumerate}
 \item $D_n\in\n$ for every $-N\leq n< -1$, 
 \item $D_n\in\b$ for every $-1 \leq n\leq N'$.
 \end{enumerate}
 Then $\nabla$ has regular singularity at $0$. 
\end{proposition}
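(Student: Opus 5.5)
The plan is to exhibit an explicit meromorphic gauge transformation that brings the connection matrix to one with at most a simple pole. We use the diagonal matrix
\[
 h(z) = \operatorname{diag}(z^{m_1}, z^{m_2}, \ldots , z^{m_r}), \qquad m_i = (i-1)M ,
\]
with $M\in\Z_+$ chosen large (we expect $M = N-1$ to suffice). The new frame is $(\v_1, \ldots , \v_r)\cdot h$, which is a lattice in $V\otimes \C (\!(z)\!)$.

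By~\eqref{eq:gauge_action}, the transformed connection matrix is
\[
 h^{-1} D h + h^{-1}\d h = h^{-1} D h + \operatorname{diag}(m_1, \ldots , m_r)\frac{\d\! z}{z}.
\]
The second term has at most a simple pole. For the first term, the $(j,i)$ entry of $h^{-1} D h$ equals $z^{m_i - m_j} d_{ji} = z^{(i-j)M} d_{ji}$.

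The key step is to estimate these entries separately according to their position.
\begin{itemize}
 \item \emph{Diagonal entries} ($j=i$). All $D_n$ with $n<-1$ lie in $\n$, so $\operatorname{val}_z(d_{ii})\geq -1$. These entries are unchanged by conjugation.
 \item \emph{Entries below the diagonal} ($j>i$). Every $D_n$ lies in $\b$, so $d_{ji}=0$ and these entries stay zero.
 \item \emph{Entries above the diagonal} ($j<i$). Here $\operatorname{val}_z(d_{ji})\geq -N$, so $\operatorname{val}_z\big(z^{(i-j)M}d_{ji}\big)\geq M-N \geq -1$ as soon as $M\geq N-1$.
\end{itemize}
Hence with $M=N-1$ the whole connection matrix in the new lattice has at most a first-order pole. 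By the definition of regular singularity, $\nabla$ is regular at $0$. Note that the transformed matrix is a Laurent polynomial, so no convergence issues arise, and the same lattice works formally and holomorphically.

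There is no real obstacle here. The only point needing care is the orientation convention in~\eqref{eq:gauge_action}: one must check that the factor multiplying the $(j,i)$ entry is $z^{m_i-m_j}$ rather than $z^{m_j-m_i}$. If the convention gives the opposite sign, we simply take $m_i = -(i-1)M$ instead. The upper triangularity must be matched with increasing powers so that the strictly upper entries gain positive powers of $z$. Nilpotency of $D_n$ for $n<-1$ is used only through the vanishing of its diagonal, which keeps the unscaled diagonal entries logarithmic.
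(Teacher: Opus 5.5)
Your proof is correct, and it takes a genuinely different route from the paper's.

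\textbf{The paper's route.} The paper does not construct a logarithmic lattice. It invokes Malgrange's analytic characterization of regularity: on every sector there is a basis of horizontal sections of at most polynomial growth. It then verifies this by back-substitution in the triangular system for flat sections, solving one scalar equation $w'=d\,w+f$ at a time. Here $d\in z^{-1}\C[z]$ is a diagonal entry, which is where the hypothesis $D_n\in\n$ for $n<-1$ enters. The forcing term $f$ is built from the already-solved components and the Laurent polynomials $d_{ji}$, and has moderate growth. The step closes by variation of constants.

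\textbf{Your route.} Your shearing transformation $\operatorname{diag}(1,z^{N-1},\ldots,z^{(r-1)(N-1)})$ checks directly the lattice definition of regular singularity stated in Section~2. With the paper's convention $\nabla\v_i=\sum_j d_{ji}\v_j\,\d\! z$, the $(j,i)$ entry does become $z^{(i-j)(N-1)}d_{ji}$, so no sign flip is needed. The diagonal entries acquire only the extra term $(i-1)(N-1)/z$.

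\textbf{What each buys.} Your argument is purely algebraic. It needs neither the equivalence between the growth criterion and the lattice definition nor any analytic estimate. It applies verbatim formally and holomorphically, and would work equally for Laurent series with a pole of order at most $N$. It also produces an explicit logarithmic lattice whose residue is upper triangular with diagonal entries $\operatorname{res}_0 d_{ii}+(i-1)(N-1)$. In a sense it runs the lattice modification of the frame $\w_i$ in Section~3 backwards. The paper's argument instead works directly with the flat sections and their moderate growth on sectors. That is the analytic content of regularity, but it requires citing Malgrange for the equivalence with the definition the paper itself gives.
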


\begin{proof}
 We are going to use the equivalent definition of regularity~\cite[(1.3)]{Malgrange}: $\nabla$ has regular singularity at $0$ if and only if over any sector $S_{\alpha,\beta}$ defined by $\alpha < \operatorname{arg} (z) < \beta$ in $\operatorname{Spec} \C [z, z^{-1}]$, $V$ admits a basis of $\nabla$-horizontal sections whose coefficients with respect to $(\v_1, \ldots ,\v_r)$ are at most of polynomial growth as $z\to 0$. 
 
 This property immediately follows by induction from the following result, whose proof in turn is a trivial application of separation of variables for the general solution of a linear ordinary differential equation. 
 \begin{lemma}
 Let 
 \[
  w'(z) = d(z) w(z) + f(z) 
 \]
 be a differential equation in $S_{\alpha,\beta}$ for some $d\in z^{-1} \C [z ]$. 
 Assume that $f$ is holomorphic in $S_{\alpha,\beta}$ and of at most polynomial growth as $z\to 0$. 
 Then the general solution $w$ is also holomorphic in $S_{\alpha,\beta}$ of at most polynomial growth as $z\to 0$. 
 \end{lemma}
\end{proof}

\section{Moduli space}\label{sec:moduli}

The results of Section~\ref{sec:normal} naturally lead us to consider the functor $NSQC_{\vec{c}}$ of non-semisimple quantum connections of rank $r$. 

\begin{theorem}\label{thm:main}
 Assume that $c_i \neq 1$ and $c_i \neq c_j$ for all $1\leq j \neq i \leq r$.
 The connected components of the coarse moduli space of the groupoid $NSQC_{\vec{c}}$ are parameterized by $\mathbb{Z}^r$. 
 Each connected component is isomorphic to $\n \oplus \n$. 
\end{theorem}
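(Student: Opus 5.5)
Every object of $NSQC_{\vec{c}}$ can be put into a normal form, and this form is unique up to a residual action that is free. The plan has three steps: produce the normal form, identify the residual action and take a slice for it, and read off the discrete and continuous parameters.

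\emph{Normal form.} Given an object $(E,\nabla,(\v_1(1),\ldots,\v_r(1)))$, (Cond3) and the non-resonance assumption $c_i\neq c_j$ let us choose $A$ with $\exp(2\pi\sqrt{-1}A)=T_\nabla$ upper triangular in the basis $\v_i(1)$. We fix the branch by requiring the diagonal entries $a_{ii}$ to satisfy $0\le \operatorname{Re} a_{ii}<1$. We then apply Proposition~\ref{lem:frame} and Proposition~\ref{prop:main} with $N=2$ (by (Cond2)). The resulting gauge-equivalent object has connection matrix $D=\sum_{n=-2}^{N'}D_nz^n$ with $D_{-2}\in\n$ and $D_n\in\b$ for $n\ge -1$. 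Its diagonal entries are $d_{ii}=\mu_i/z$ with $\mu_i=a_{ii}+n_i$; the diagonal is purely residual by \eqref{eq:recurrence}. Its flag at $1$ is the standard one, and its basis at $1$ is $g(1)$-related to $\v_i(1)$ by a unipotent upper triangular matrix. The integer vector $(n_1,\ldots,n_r)\in\Z^r$ is a gauge invariant: it is defined intrinsically from the lattice $E$ and the canonical Deligne lattice $E'$, which is uniquely pinned down by the branch choice and non-resonance. It is also locally constant. This gives the decomposition into components indexed by $\Z^r$; the hypothesis $c_i\neq 1$ ensures the branch choice $a_{ii}\in(0,1)+\sqrt{-1}\mathbb{R}$ avoids the ambiguity $a_{ii}=0$ versus $1$.

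\emph{Residual gauge freedom.} Next we show that any $h\in\operatorname{Aut}(E)$ preserving this normal form (upper triangular polynomial shape, with the basis at $1$ equal to a unipotent modification of the standard one) must be upper triangular with constant diagonal equal to $1$ at $z=1$, hence unipotent on the diagonal. It is determined by finitely many off-diagonal holomorphic functions. The idea is to use this freedom to kill all coefficients except a fixed set of $r(r-1)$ of them. For each $j<i$, the $(j,i)$ entry of the gauge equation reads
\[
d'_{ji}=d_{ji}+h_{ji}'+\frac{\mu_j-\mu_i}{z}\,h_{ji}+(\text{terms from entries closer to the diagonal}).
\]
Since $\mu_j-\mu_i\notin\Z$ by non-resonance, the operator $h\mapsto h'+\frac{\mu_j-\mu_i}{z}h$ acts on $z^m$ by the nonzero scalar $m+\mu_j-\mu_i$, shifted to $z^{m-1}$. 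Working inductively along superdiagonals, we can therefore uniquely choose the polynomial parts of $h_{ji}$ so that all coefficients of $d_{ji}$ in degrees $\ge -1$ vanish, except possibly the $z^{-1}$ coefficient. That one cannot be changed by the $z^0$ term of $h_{ji}$ without changing $h(1)$, and $h(1)$ must stay unipotent with fixed flag data. The surviving invariants are then the $z^{-2}$ coefficient and the $z^{-1}$ coefficient of each $d_{ji}$, $j<i$. These are exactly the entries of $D_{-2}\in\n$ and the strictly upper part of $D_{-1}$, giving $\n\oplus\n$.

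\emph{Surjectivity and freeness.} By Proposition~\ref{prop:converse}, every pair $(D_{-2},\,\text{strictly upper part of }D_{-1})$, together with the diagonal $\mu_i/z$ for the prescribed $\mu_i$, defines a connection with regular singularity satisfying (Cond1)--(Cond3). Its eigenvalues are $\exp(2\pi\sqrt{-1}\mu_i)=c_i$ by Proposition~\ref{prop:main}\eqref{item:monodromy}. Freeness of the action, already noted in the introduction, makes the coarse moduli space a genuine quotient. The slice above provides the isomorphism of each component with $\n\oplus\n$.

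\emph{Main obstacle.} The delicate step is the residual-gauge normalization. One must check precisely which coefficients can be killed while respecting the rigidification at $z=1$, and that the induction along superdiagonals does not reintroduce poles of order $>2$. If the bookkeeping shows instead that the $z^{-1}$ coefficients can be absorbed and some other pair of coefficients survives, the count $r(r-1)$ should still come out the same; only the choice of slice would change.
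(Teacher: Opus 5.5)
Your outline (canonical normal form via Proposition~\ref{prop:main}, residual upper-triangular gauge group, slice obtained by inverting $h\mapsto h'+\frac{\mu_j-\mu_i}{z}h$ superdiagonal by superdiagonal) matches the paper's. There is a genuine gap in the step that produces the second copy of $\n$, namely the use of the rigidification.

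You allow residual $h$ whose value $h(1)$ is an arbitrary unipotent matrix, because you only require the framing of a normal form to be a unipotent modification of $\e_i(1)$. Then the constant $h=\mathrm{I}_r+cE_{j,j+1}$ (elementary matrix) is admissible. Conjugation by $B$ preserves $\n$, $\b$ and the diagonal $\mu_i/z$, and the framing remains a unipotent modification of $\e_i(1)$. Yet this $h$ shifts the $z^{-1}$-coefficient of $d_{j,j+1}$ by $c(\mu_j-\mu_{j+1})\neq 0$.

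So the $z^{-1}$-coefficients are not invariant under the freedom you allow. Your reason that they cannot be changed ``without changing $h(1)$'' is no obstruction, because $h(1)$ stays unipotent with the same flag. Meanwhile the unipotent matrix $u$ relating the framing of the normal form to $\e_i(1)$ is never recorded, so assigning $(D_{-2},D_{-1}^{\mathrm{strict}})$ to an isomorphism class is not well defined. For $r=2$ your group reduces $d_{12}$ all the way to $\alpha z^{-2}$, and the second parameter is hidden in $u$. This is exactly the scenario of your closing hedge, and a dimension count cannot replace identifying the slice.

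The missing step is to normalize the framing first. Gauge by the appropriate constant element of $U$, which keeps the normal-form shape, so that the framing is exactly $\e_i(1)$. The residual group is then the paper's $\operatorname{Aut}_0(E)$, with $h(1)=\mathrm{I}_r$.

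Now the constant term of $h_{ji}$ is forced to be $-\sum_{m\ge 1}h_{ji}^{(m)}$. Killing the coefficients of degree $\ge 0$ therefore shifts the $z^{-1}$-coefficient by $(\mu_j-\mu_i)h_{ji}(0)$, so the surviving $\beta_{ji}$ is in general not the original entry of $D_{-1}$. That $\alpha_{ji}z^{-2}+\beta_{ji}z^{-1}$ is a slice is the computation that $L_\mu$ on $\{P\in\C[z]\colon P(1)=0\}$ has cokernel spanned by $z^{-2},z^{-1}$ (Proposition~\ref{prop:gauge}). You also need the check, superdiagonal by superdiagonal, that two slice elements related by such an $h$ force $h=\mathrm{I}_r$. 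Alternatively, keep $u$ as data, use the constant terms to kill the $z^{-1}$-coefficients too, and take the slice $(D_{-2},u)\in\n\times U$.

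Two smaller points. First, the upper triangularity of residual $h$ needs an argument; this is the content of the paper's lemma on $\Phi(h)$. With diagonals $\mu_i/z$, start from the corner $(r,1)$ and show that each below-diagonal entry satisfies $h_{ij}'=\frac{\mu_j-\mu_i}{z}h_{ij}$. Hence it vanishes, since $\mu_j-\mu_i\notin\Z$. Second, your explanation of the hypothesis $c_i\neq 1$ is spurious: the branch $0\le\operatorname{Re}a_{ii}<1$ is unambiguous at $a_{ii}=0$.
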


\begin{proof}
 Let us be given a triple $(E, \nabla,  (\v_1 (1), \ldots ,\v_r (1)))$ as in (Ob1)--(Ob3), satisfying conditions (Cond1)--(Cond3). 
 According to Proposition~\ref{prop:main} with $N=2$, there exist $g \in \operatorname{Aut}(E\otimes_{\mathcal{O}}  \mathcal{O}[z^{-1}])$ such that
 \[
  ( \d\! + K \d\! z ) \cdot g = \d\! + \sum_{n=-2}^{N'} D_n z^n \d\! z, 
 \]
 with $D_{-2}\in\n, D_{-1},\ldots , D_{N'}\in\b$. 
 By the assumption on the eigenvalues $c_i$, $T$ is regular semi-simple.
 It follows that the endomorphisms $A\in \gl (E_{|1})$ satisfying~\eqref{eq:expA} are parameterized by $\mathbb{Z}^r$.
 Moreover, for every choice of logarithm $A$ there exists a normal form as in Proposition~\ref{prop:main}, such that in addition the eigenvalues of $D_{-1} = \operatorname{res}_0 (D)$ agree with those of $A$.
 This is because one can pass from one such normal form to another one by using a diagonal gauge transformation with diagonal elements $z^{k_i}$, $k_i \in \mathbb{Z}$.
 Recall, on the other hand, that the morphisms in $NSQC_{\vec{c}}$ are given by $\operatorname{Aut}(E)$.
 Now, since gauge transformations by $\operatorname{Aut}(E)$ do not modify the eigenvalues of the residue, it follows that $NSQC_{\vec{c}}$ fibers over a $\mathbb{Z}^r$-torsor.
 From now on, we assume that we fix a base point $(\mu_1, \ldots , \mu_r)$ in this $\mathbb{Z}^r$-torsor.
 
 We now define the intermediate category $PCRS_{\vec{c}}$ of polynomial connections with regular singularities. 
 The objects of $PCRS_{\vec{c}}$ are $(E, \nabla)$ as in (Ob1)--(Ob2) satisfying (Cond1)--(Cond2), such that with respect to the standard basis $(\e_1, \ldots ,\e_r)$ we have 
 \[
  \nabla = \d\! + \sum_{n=-2}^{N'} D_n z^n \d\! z 
 \]
 \begin{itemize}
  \item $D_{-2}\in\n, D_{-1},\ldots , D_{N'}\in\b$ 
  \item the diagonal entries of $T_{\nabla}$ with respect to the standard  basis are $c_1, \ldots , c_r$. 
 \end{itemize}
 Notice that by Proposition~\ref{prop:converse}, $\nabla$ then necessarily has regular singularities.
 The morphisms of $PCRS_{\vec{c}}$ are given by holomorphic gauge transformations $\ell \in \operatorname{Aut}(E)$ satisfying:
 \begin{enumerate}
  \item $\ell$ takes values in the upper triangular Borel $B$,
  \item the eigenvalues of $\ell$ are holomorphic and invertible ($\ell_{ii} \in \mathcal{O}^{\times}$),
  \item $\ell(1) = \mbox{I}_r$.
 \end{enumerate}

 We define the functor 
 \begin{equation*}
  \Phi\colon NSQC_{\vec{c}} \to PCRS_{\vec{c}}
 \end{equation*}
 whose action on connections is given by 
 \[
   ( \d\! + K \d\! z ) \mapsto \d\! + \sum_{n=-2}^{N'} D_n z^n \d\! z, 
 \]
 and that forgets the framing of $NSQC_{\vec{c}}$. 
 The action of $\Phi$ on a morphism 
 \[
  h\colon (\nabla,  (\w_1 (1), \ldots ,\w_r (1)))\to (\tilde{\nabla},  (\tilde{\w}_1 (1), \ldots ,\tilde{\w}_r (1)))
 \]
 is then defined by the commutative diagram 
 \[
  \xymatrix{\nabla \ar[rr]^{\hspace{-1.5cm}g} \ar[d]^h && \nabla\cdot g = \d\! + \sum_{n=-2}^{N'} D_n z^n \d\! z \ar@{-->}[d]^{\Phi(h) = \tilde{g}\circ h \circ g^{-1}} \\
  \tilde{\nabla} = \nabla \cdot h \ar[rr]^{\hspace{-1.5cm}\tilde{g}} && \tilde{\nabla}\cdot \tilde{g} =  \d\! + \sum_{n=-2}^{N'} \tilde{D}_n z^n \d\! z
  }
 \]
 where $g,\tilde{g}$ are the gauge transformations produced by Proposition~\ref{lem:frame}.
 
 By Proposition~\ref{prop:converse}, every connection satisfying $D_{-2}\in\n, D_{-1},\ldots ,D_{N'}\in\b$ has regular singularity at $0$. 
 On the other hand, it is trivial that if $D$ satisfies the above conditions, then the monodromy around $0$ is upper triangular in the frame $(\e_1, \ldots , \e_r)$. 
 This shows that the functor $\Phi$ is essentially surjective. 
 
 We need to show that the action on morphisms is well-defined. 
 \begin{lemma}
  $\Phi(h)$ is an upper triangular holomorphic endomorphism with invertible holomorphic eigenvalues.
 \end{lemma}
 \begin{proof}[Proof of the Lemma]
 We are given~\eqref{eq:wiei} that for every $1\leq i \leq r$, 
 \[
  g(1)^{-1} \e_i = \w_i (1), \quad h (1) \w_i (1) = \tilde{\w}_i (1), \quad \tilde{g} (1) \tilde{\w}_i (1) = \e_i
 \]
 We infer that 
 \[
  \Phi(h) (1) \colon \e_i\mapsto \e_i, 
 \]
 in other words $\Phi(h) (1) = \mbox{I}_r$. 
 
 By construction, $g,h,\tilde{g}$ are holomorphic, so $\Phi(h)$ is holomorphic too, and its convergence radius is equal to the minimum of the convergence radii of $g,h,\tilde{g}$.
 
 We need to show that $\ell = \Phi(h)$ takes values in the standard Borel $B$.
 Let $E_i$ be the submodule of $E$ spanned by the basis vectors $(\e_1, \ldots , \e_i)$, so that we have the filtration
 \[
  E  = E_r \supset \cdots \supset E_1 \supset 0.
 \]
 Let us show that $\ell$ preserves this filtration.
 Indeed, assuming the contrary, there would exist $1 \leq i \leq r-1$ such that $\operatorname{Im}(\ell|_{E_i}) \not{\!\!\subseteq} E_i$.
 We can choose $i$ to be the maximal such value.
 Let $j\geq 1$ be the smallest integer such that
 \[
  \operatorname{Im}(\ell|_{E_i}) \subseteq E_{i+j}.
 \]
 Then, we have
 \[
  \ell (\v_i) = \ell_{i,i+j} \v_{i+j} + \sum_{1\leq k< i+j} \ell_{i,k} \v_k,
 \]
 with $\ell_{i,i+j} \in \mathcal{O} \setminus \{ 0 \}$.
 Notice that $\tilde{\nabla}$ is defined as the composition
 \[
  E_i \xrightarrow{\ell} E_{i+j} \xrightarrow{\nabla} E_{i+j} \xrightarrow{\ell^{-1}} E_{i+j},
 \]
 where the image of the last arrow $\ell^{-1}$ is indeed in $E_{i+j}$ by the maximal choice of $i$ and $j\geq 0$.
 It follows from~\eqref{eq:di} that
 \begin{equation}\label{eq:nablaellvi}
    \nabla (\ell (\v_i)) = \left( \operatorname{d} \! \ell_{i,i+j} + \frac{a_{i+j, i+j} + n_{i+j}}z \ell_{i,i+j} \right) \v_{i+j}
 \end{equation}
 up to lower order terms in $z$ and terms in $\v_k$ with $k< i+j$.
 Now, by the assumption $c_{i+j}\neq 1$, the residue of $\nabla$ has no integer eigenvalue, so we have
 \[
  a_{i+j, i+j} + n_{i+j} \notin \mathbb{Z}.
 \]
 This implies that the residue of the coefficient of $\v_{i+j}$ in~\eqref{eq:nablaellvi} does not vanish.
 In particular, the coefficient of $\v_{i+j}$ in~\eqref{eq:nablaellvi} does not vanish.
 Since $\ell$ is an isomorphism, it then follows that the coefficient of $\v_{i+j}$ in $\tilde{\nabla} (\v_i )$ does not vanish either.
 This is a contradiction, because $\tilde{\nabla}$ preserves the filtration $E_i$.

 Finally, let us show that the entries $\ell_{ii}$ are invertible holomorphic: the $i$'th diagonal entry of $\tilde{\nabla}$ is given by $d_{ii} + \operatorname{d} \! \log \ell_{ii}$.
 If
 \[
  \ell_{ii} = c z^{k} + O (z^{k + 1})
 \]
 for some $c\in \mathbb{C}\setminus \{ 0 \}$ and $k\in \mathbb{Z}$, then
 \[
  \operatorname{d} \! \log \ell_{ii} = c k z^{-1} \operatorname{d} \! z.
 \]
 Given that the residue of the $i$'th diagonal entry is fixed to be $\mu_i$ for both $\nabla$ and $\tilde{\nabla}$, we infer $k = 0$.

\end{proof}
 
 The functor $\Phi$ is fully faithful by definition. 
 It follows that the moduli space $NSQC_{\vec{c}}$ is equal to the coarse moduli space of $PCRS_{\vec{c}}$.
 The latter moduli space is the quotient of 
 \begin{equation}\label{eq:configuration_space}
    \mathcal{A} = \left\{ \d + \sum_{n=-2}^{N'} D_n z^n \d\! z \vert D_{-2}\in\n, D_{-1},\ldots ,D_{N'}\in\b \right\}
 \end{equation}
 by the group 
 \[
  \operatorname{Aut}_0(E) = \{ \ell \in \operatorname{Aut}(E) \, \vert \quad \ell \in B, \, \ell_{ii} \in \mathcal{O}^{\times}, \, \ell (1) = \mbox{I}_r \}.
 \]
 We will denote by $d_{ji}$ the $(ji)$-entry of $D$ for $j\leq i$. 
 Recall the notation $\mu_i = \operatorname{res}_0 d_{ii}$ from Proposition~\ref{prop:main}. 

 For every $1\leq i \leq r$, let us set 
 \[
  \ell_{ii} (z) = \exp \left( \int_1^z d_{ii} - \frac{\mu_i}{\zeta} \d\! \zeta \right) \in \O^{\times}.
 \]
 The choice of the definite integral ensures $\ell_{ii} (1) = 1$. 
 Then, the $i$'th diagonal entry of 
 \[
  \left( \d + \sum_{n=-2}^{N'} D_n z^n \right) \cdot \ell
 \]
 is equal to $\mu_i\frac{\d\! z}{z} $. 

 For every $1\leq j < i \leq r$, let us set 
 \begin{equation*}
    m(j,i) = \begin{cases}
               \mu_j - \mu_i \quad \mbox{if} \; \mu_j - \mu_i \in \N \\
              0 \quad \mbox{otherwise}.
             \end{cases}    
 \end{equation*}
 The theorem will follow from the 
 \begin{proposition}\label{prop:gauge}
  There exist a unique polynomial $\ell \in \operatorname{Aut}_0(E)$ and unique $\alpha_{ji}, \beta_{ji} \in \C$ such that for all $1\leq j < i \leq r$, the $(ji)$-entry of $\nabla\cdot \ell$ is equal to
 \[
  \alpha_{ji} z^{-2} + \beta_{ji} z^{m(j,i)-1}.
 \]
 Moreover, $\alpha_{ji}$ is equal to the $(ji)$-entry of $D_{-2}$.
 \end{proposition}
 \begin{proof}
 To find $\ell$, we apply induction on $j-i=k$, and exhibit $\ell$ as a product $\ell_1 \cdots \ell_{r-1}$.
 We assume that the $(j'i')$-entries with $j'-i'<k$ are of the stated form, and we look for $\ell$ such that $\ell_{j'i'} = \delta_{j'i'}$ for $j'-i'<k$.
 Gauge transformation by such an $\ell$ does not modify connection matrix entries with $j'-i'<k$.
 The action on $d_{ji}$ is
 \begin{align*}
  d_{ji} & \mapsto d_{ji} + z^{-1} (\mu_i - \mu_j ) \ell_{ji} + \partial_z \ell_{ji}\\
  & = d_{ji} + z^{\mu_j - \mu_i} \partial_z \left( z^{\mu_i - \mu_j} \ell_{ji} \right).
 \end{align*}
 For $\mu\in\C$, let us define the linear map
 \[
  L_{\mu} \colon \{ P\in\C [z]\colon P(1) = 0 \} \to z^{-2} \C [z]
 \]
 by
 \[
  P\mapsto z^{\mu} \partial_z \left( z^{-\mu} P \right).
 \]
 The action of $L_{\mu}$ on the standard basis of Laurent polynomials is given by
 \begin{equation}\label{eq:Lmu}
  L_{\mu}\colon z^n \mapsto (n-{\mu}) z^{n-1}.
 \end{equation}
 In particular, $z^{-2}\notin \operatorname{Im}(L_{\mu})$ regardless of the value of $\mu$.
 \begin{lemma}
 \begin{enumerate}
  \item  For ${\mu}\notin\N$, $\operatorname{coker}(L_{\mu})$ is generated by $\C \cdot z^{-2}, \C \cdot z^{-1}$.
  \item For ${\mu}\in\N$, $\operatorname{coker}(L_{\mu})$ is generated by $\C \cdot z^{-2}, \C \cdot z^{{\mu}-1}$.
 \end{enumerate}
 \end{lemma}
  \begin{proof}[Proof of the Lemma]
  \begin{enumerate}
   \item For ${\mu}\notin\N$, let us be given any $Q\in z^{-1} \C [z]$: 
   \[
    Q(z) = q_{-1} z^{-1} + \cdots + q_{M-1} z^{M-1}. 
   \]
   Then, $m-\mu\neq 0$ for all $m\in\mathbb{N}$, and we see from~\eqref{eq:Lmu} that 
   \[
    L_{\mu}\colon -\mu^{-1} q_{-1} + \cdots + (M-\mu)^{-1} q_{M-1} z^M \mapsto Q(z).
   \]
   The only obstruction for $Q\in\operatorname{Im} (L_{\mu})$ is a linear relation on the coefficients $q_{-1}, \ldots, q_{M-1}$, expressing the vanishing at $z=1$ of the polynomial on the left hand side.
   This means that $\dim(\operatorname{coker}(L_{\mu})) = 2$. 
   In particular, $z^{-1}$ represents a nontrivial class in $\operatorname{coker}(L_{\mu})$, and together with $z^{-2}$ they form a basis of it. 
   \item For $\mu\in\N$, a basis for the source of $L_{\mu}$ is provided by $z^n - z^{\mu}$ for $0\leq n \neq {\mu}$, and 
 \[
  L_{\mu}\colon z^n - z^{\mu} \mapsto (n-{\mu}) z^{n-1}.
 \]
 Therefore, the only basis element of the target that is not in the image of $L_{\mu}$ is $z^{\mu-1}$.
  \end{enumerate}
\end{proof}
 Applying $L_{\mu_j - \mu_i}$, there exists a unique $\ell_{ji}(z)$ such that 
 \[
  d_{ji} (z) + L_{\mu_j - \mu_i} (\ell_{ji}(z)) = \alpha_{ji} z^{-2} + \beta_{ji} z^{m(j,i)-1}
 \]
 and $\ell_{ji}(1) = 0$. This finishes the proof of Proposition~\ref{prop:gauge}.
\end{proof}
We find (see~\eqref{eq:configuration_space})
\[
 \mathcal{A} / \operatorname{Aut}_0(E) = \{ \alpha_{ji} z^{-2} + \beta_{ji} z^{m(j,i)-1} \; \vert \quad \alpha_{ji}, \beta_{ji} \in \C \} \cong  \n \oplus \n. 
\]
This finishes the proof of Theorem~\ref{thm:main}.
\end{proof}

\section{Further aspects}

\subsection{Symmetric group action} 
Assuming the non-resonance condition $c_i \neq c_j$, for any object $(E, \nabla,  (\v_1 (1), \ldots ,\v_r (1)))\in\operatorname{Ob}(NSQC_{\vec{c}})$ there exists a constant gauge transformation bringing $T$ to diagonal form. 
Assume that we have achieved this. 
We may then rearrange the eigenvalues $c_i$ of $T$ in any possible order. 
Namely, for any permutation $\sigma\in\mathfrak{S}_r$, we may consider the vector $\sigma\cdot \vec{c}$ defined by 
\[
 (\sigma\cdot \vec{c})_i =  \vec{c}_{\sigma^{-1}(i)}. 
\]
Then, 
\[
 (E, \nabla,  (\v_{\sigma^{-1}(1)} (1), \ldots ,\v_{\sigma^{-1}(r)} (1)))\in\operatorname{Ob}(NSQC_{\sigma\cdot\vec{c}}). 
\]
In this way, we get a (nonlinear) action of $\mathfrak{S}_r$ on the vector space $\n \oplus \n$. 
Similarly, supposing that the construction of Theorem~\ref{thm:main} goes through in family, it would be possible to define an action of the braid group $\pi_1 (H, \vec{c})$ on $\n \oplus \n$, where $H\subset \operatorname{GL} (r, \C )$ is the maximal torus. 
It would be worth to pursue further the study of these actions.

\subsection{Relationship to tame parahoric bundles}

Our main result and its proof bear some resemblance to the  Riemann--Hilbert correspondence for tame parahoric (also called logahoric) bundles~\cite[Theorem~D]{Boalch_logahoric} (and hence, to the Grothendieck--Springer resolution).
Indeed, the latter theorem states an equivalence of groupoids between some regular connections which may admit poles of arbitrarily high order (with suitable loop group element actions as morphisms) on one side, and pairs consisiting of a conjugate $P$ of a fixed parabolic subgroup $P_{\phi}\subset G$ together with an element $T\in P$ (with conjugation as morphisms) on the other side. 
Indeed, we also have regular connections with higher order poles on one side, and a parabolic $P$ of a given type (namely, the subgroup of elements preserving some full flag $(\v_1 (1), \ldots ,\v_r (1))$) and a transformation $T\in P$ (the monodromy) on the other side. 
However, there is a major difference too: the valuation that a coefficient of a parahoric bundle is allowed to have depends on the integer part of the corresponding weight of a fixed covector. 
Up to applying a Weyl group element, this condition allows poles of different orders for the entries above the diagonal, and even more crucially the entries below the diagonal are not necessarily zero, only vanishing to a certain order. 
This relationship would be worth to be explored further.

\end{document}